\documentclass[10pt]{amsart}

\usepackage{amssymb,latexsym, mathtools,tikz}
\usepackage{enumerate}
\usepackage{graphicx}
\usepackage{float}
\usepackage{placeins}
\usepackage{mdframed}
\usepackage{amssymb}
\usepackage{esint}
\usepackage{cool}
\usepackage[all,cmtip]{xy}
\usepackage{mathtools}
\usepackage{amstext} 
\usepackage{array}   
\usepackage[shortlabels]{enumitem}
\usepackage{ytableau}
\usepackage{tikz}
\usetikzlibrary{arrows,decorations.markings, cd}
\tikzstyle arrowstyle=[scale=1]
\tikzstyle directed=[postaction={decorate,
decoration={markings,mark=at position .65 with {\arrow[arrowstyle]{stealth}}}}]
\usepackage{mathdots}
\usepackage{quiver}

\newcolumntype{L}{>{$}l<{$}} 
\newcolumntype{C}{>{$}c<{$}}
\newtheorem{theorem}{Theorem}[section]
\newtheorem{lemma}[theorem]{Lemma}
\newtheorem{cor}[theorem]{Corollary}
\newtheorem{prop}[theorem]{Proposition}
\newtheorem{setup}[theorem]{Setup}
\theoremstyle{definition}
\newtheorem{definition}[theorem]{Definition}

\newtheorem{obs}[theorem]{Observation}
\newtheorem{notation}[theorem]{Notation}

\theoremstyle{remark}
\newtheorem{remark}[theorem]{Remark}

\newtheorem{the context}[theorem]{The Context}
\newtheorem{question}[theorem]{Question}
\numberwithin{equation}{theorem}
\numberwithin{equation}{section}




\newcommand{\cat}[1]{\mathcal{#1}}



\newcommand{\rank}{\operatorname{rank}}

\newcommand{\grade}{\operatorname{grade}}


\newcommand{\coker}{\operatorname{Coker}}

\newcommand{\im}{\operatorname{Im}}

\newcommand{\Ker}{\operatorname{Ker}}



\newcommand{\ideal}[1]{\mathfrak{#1}}
\newcommand{\m}{\ideal{m}}











\renewcommand{\geq}{\geqslant}
\renewcommand{\leq}{\leqslant}
\renewcommand{\ker}{\Ker}
\renewcommand{\hom}{\Hom}


\newcommand{\Hom}{\operatorname{Hom}}

\newcommand{\maps}[5]{\xymatrix{#1 \ar[r]^-{#3} & #2 \\
#4 \ar@{|->}[r] & #5 \\}}

\newcommand{\mfa}{\mathfrak{a}}

\def\w{\wedge}

\def\im{\operatorname{im}}

\setcounter{MaxMatrixCols}{20}

\newcommand{\fitt}{\operatorname{Fitt}}

\newcommand{\sgn}{\operatorname{sgn}}

\newcommand{\tm}{\operatorname{tm}}

\begin{document}
\title{Products of Ideals and Golod Rings}

\keywords{Golod rings, Minimal free resolutions, DG-algebras}

\subjclass[2010]{13D02, 13D07, 13C13}

\author{Keller VandeBogert }
\date{\today}

\maketitle

\begin{abstract}
    In this paper, we study conditions guaranteeing that a product of ideals defines a Golod ring. We show that for a $3$-dimensional regular local ring (or $3$-variable polynomial ring) $(R , \m)$, the ideal $I \m$ always defines a Golod ring for any proper ideal $I \subset R$. We also show that non-Golod products of ideals are ubiquitous; more precisely, we prove that for any proper ideal with grade $\geq 4$, there exists an ideal $J \subseteq I$ such that $IJ$ is not Golod. We conclude by showing that if $I$ is any proper ideal in a $3$-dimensional regular local ring and $\mfa \subseteq I$ a complete intersection, then $\mfa I$ is Golod. 
\end{abstract}

\section{Introduction}

Let $(R , \m ,k )$ denote a local ring with embedding dimension $e$ and codepth $d$. Serre established a coefficient-wise inequality for the Poincar\'e series of $R$
$$P^R_k (t) \leq \frac{(1+t)^e}{1 - \sum_{j=1}^d \rank_k H_j (K^R) t^{j+1}},$$
where $K^R$ denotes the Koszul complex on a minimal generating set of $\m$. Later, Golod studied rings for which the above inequality is an equality, linking this ``maximality" condition to the existence of certain higher order homology operations, known as trivial Massey operations (see \cite{golod1962}). Today, rings for which Serre's inequality is equality are known as Golod rings, and uncovering conditions that imply Golodness has been a widely studied problem for multiple decades now; see for instance \cite{peeva19960}, \cite{srinivasan1989algebra}, \cite{herzog2013ordinary}, \cite{christensen2018golod}, or \cite{herzog2018koszul} (and the references therein).

The existence of a trivial Massey operation implies that the Koszul homology algebra of a Golod ring is trivial; that is, there are no nontrivial products between elements of positive degree. In \cite[Theorem 5.1]{berglund2007golod}, it was claimed that for monomial ideals, trivial multiplication on the Koszul homology algebra is in fact equivalent to Golodness. It turned out that there was a gap in the proof, with Katth\"an later producing a non-Golod ring (defined by a monomial ideal) with trivial product on the Koszul homology (see \cite{katthan2017non}). Likewise, it was a question of Welker as to whether all products of proper homogeneous ideals define Golod rings; this question was answered in the negative by De Stefani in \cite{de2016products} with a rather simple counterexample: the product of the ideals $(x_1 , \dots , x_4)$ and $(x_1^2 , \dots , x_4^2) \subset k[x_1 , \dots , x_4]$ does not define a Golod ring. It turns out that in the $3$ variable case, products of monomial ideals \emph{do} define Golod rings \cite{dao2020monomial}; the case for arbitrary ideals in a $3$-variable polynomial ring remains open.

The above examples serve to illustrate that Golod ideals remain mysterious even in low dimensional rings, and proving that a particular class of ideals define Golod rings is often a difficult problem. The first case where this problem becomes nontrivial is for rings of codepth $3$, since in codepth $1$, all rings are Golod, and in codepth $2$ the ring is either Golod or a complete intersection. In this paper, we attempt to introduce new techniques for studying rings defined by products of ideals. We prove that for an arbitrary $3$-dimensional regular local ring (or homogeneous ideal in a $3$-variable polynomial ring) $(R,\m)$, the product $I \m$ defines a Golod ring for any proper ideal $I$ (see Theorem \ref{thm:ImGolod}). We also show that Golodness of a product of ideals can be checked by verifying that a certain morphism of complexes is a split injection (see Proposition \ref{prop:golodSplit}). This allows us to prove that if $I$ is a complete intersection and $J \subset I$ is generated by some subset of the minimal generators of $I$, then $IJ$ defines a Golod ring.

We also prove that non-Golod products of ideals are in fact quite ubiquitous. More precisely, let $R$ be any regular local ring or standard graded polynomial ring over a field $k$. Then we prove that for any proper ideal $I$ and complete intersection $\mfa \subset \m I$ with $\mu ( \mfa ) \geq 4$, the ideal $\mfa I$ does \emph{not} define a Golod ring (see Proposition \ref{prop:nonGolodRing}). This generalizes the example given by De Stefani in \cite{de2016products}. In particular, for any ideal $I$ with grade $\geq 4$, one can find an ideal $J \subset I$ such that $IJ$ does not define a Golod ring. In the case that $\grade (I) \leq 3$, we prove the opposite; that is, $\mfa I$ is a Golod ring for any complete intersection $\mfa \subseteq I$.

The techniques in this paper involve understanding free resolutions of products of ideals and the lifts of certain types of basis elements to the Koszul homology algebra. Suppose $I = (a_1 , \dots , a_n)$ and $J$ are proper ideals; one way to construct a free resolution of $IJ$ is to construct the resolution of $(a_1 , \dots , a_k) + J (a_{k+1} , \dots , a_n)$ in an iterative fashion. As it turns out, this is handled by the machinery of trimming complexes as in \cite{vandebogert2020trimming}; moreover, it was recently shown how to construct a DG-algebra structure on trimming complexes of length $3$ in \cite{vandebogert2020dg}. This yields information about the Tor-algebra structure (and hence Golodness) for products of ideals. 

The paper is organized as follows. In Section \ref{sec:background}, we give a quick rundown of DG-algebras, trivial Massey operations, and Golod rings. We also provide a concise introduction to iterated trimming complexes along with a statement about the algebra structure in the length $3$ case. We conclude with an observation on the contribution of the $\mfa_i$ ideals of Setup \ref{set:trimmingcxSetup} to the Koszul homology of the trimmed ideal. In Section \ref{sec:ofFormIm}, we prove a first main result of the paper; namely, that for any $3$-dimensional regular local ring, the ideal $I \m$ defines a Golod ring. The proof actually follows quite easily by results previously established in \cite{vandebogert2020dg}.

In Section \ref{sec:nonGolodProds}, we study Golod and non-Golod rings defined by products of ideals in higher dimensional rings. As a first result, we prove that if certain maps appearing in the construction of iterated trimming complexes are split injections, then the associated product of ideals must define a Golod ring. Next, we consider an ideal $I$ containing any complete intersection $\mfa \subset I$. We first construct an explicit free resolution of $R / \mfa I$ that is minimal in the case $\mfa \subseteq \m I$, and then show that this resolution has nontrivial multiplication in the Tor-algebra if $\mu (\mfa ) \geq 4$. This in particular proves that $\mfa I$ cannot be a Golod ring. We then prove that if, in the above situation, $I$ has grade $\leq 3$ and $\mfa \subset I$, then $\mfa I$ \emph{does} define a Golod ring. We conclude with discussion and further questions.

\section{Golod Rings and Trimming Complexes}\label{sec:background}

In this section, we lay the groundwork for the rest of the paper. The first order of business involves defining trivial Massey operations and then Golod rings. We define Golod rings in terms of the existence of trivial Massey operations since this will be our primary means of proving Golodness in later sections. Next, we recall the construction of iterated trimming complexes as in \cite{vandebogert2020trimming} and the algebra structure in the length $3$ case. The following notation will be in play for the rest of the paper:

\begin{notation}
The notation $(F_\bullet , d_\bullet)$ will denote a complex $F_\bullet$ with differentials $d_\bullet$. When no confusion may occur, $F$ may be written, where the notation $d^F$ is understood to mean the differential of $F$ (in the appropriate homological degree).

Given a complex $F_\bullet$ as above, elements of $F_n$ will often be denoted $f_n$, without specifying that $f_n \in F_n$.
\end{notation}

\begin{definition}\label{def:dga}
A \emph{differential graded algebra} $(F,d)$ (DG-algebra) over a commutative Noetherian ring $R$ is a complex of free $R$-modules with differential $d$ and with a unitary, associative multiplication $F \otimes_R F \to F$ satisfying
\begin{enumerate}[(a)]
    \item $F_i F_j \subseteq F_{i+j}$,
    \item $d_{i+j} (x_i x_j) = d_i (x_i) x_j + (-1)^i x_i d_j (x_j)$,
    \item $x_i x_j = (-1)^{ij} x_j x_i$, and
    \item $x_i^2 = 0$ if $i$ is odd,
\end{enumerate}
where $x_k \in F_k$.
\end{definition}

The next definition will be essential for defining Golod rings. If $z_\lambda$ is a cycle in some complex $A$, then the notation $[z_\lambda]$ denotes the homology class of $z_\lambda$. In the following definition, $\overline{a} := (-1)^{|a|+1} a$, where $|a|$ denotes the homological degree of $a \in A$.  

\begin{definition}
Let $A$ be a DG-algebra with $H_0 (A) \cong k$. Then $A$ admits a \emph{trivial Massey operation} if for some $k$-basis $\mathcal{B} = \{ h_\lambda \}_{\lambda \in \Lambda}$, there exists a function
$$\mu : \coprod_{i=1}^\infty \cat{B}^i \to A$$
such that
\begingroup\allowdisplaybreaks
\begin{align*}
    &\mu ( h_\lambda) = z_\lambda \quad \textrm{with} \quad [z_\lambda] = h_\lambda, \ \textrm{and} \\
    &d \mu (h_{\lambda_1} , \dots , h_{\lambda_p} ) = \sum_{j=1}^{p-1} \overline{\mu (h_{\lambda_1} , \dots , h_{\lambda_j})} \mu (h_{\lambda_{j+1}} , \dots , h_{\lambda_p}). 
\end{align*}
\endgroup
\end{definition}

Observe that taking $p=2$ in the above definition yields that $H_{\geq 1} (A)^2 = 0$, so the induced algebra structure on $H(A)$ is trivial for a DG-algebra admitting a trivial Massey operation.

\begin{definition}\label{def:Golod}
Let $(R,\m)$ be a local ring and let $K^R$ denote the Koszul complex a minimal set of generators of $\m$. If $K^R$ admits a trivial Massey operation $\mu$, then $R$ is called a \emph{Golod ring}. An ideal $I$ in some ring $Q$ will be called \emph{Golod} if the quotient $Q/I$ is Golod.
\end{definition}

\begin{setup}\label{set:trimmingcxSetup}
Let $(R , \m , k)$ be a regular local ring. Let $I \subseteq R$ be an ideal and $(F_\bullet, d_\bullet)$ a free resolution of $R/I$. 

Write $F_1 = F_1' \oplus \Big( \bigoplus_{i=1}^t Re_0^i \Big)$, where, for each $i=1, \dotsc , t$, $e^i_0$ generates a free direct summand of $F_1$. Using the isomorphism
$$\hom_R (F_2 , F_1 ) = \hom_R (F_2,F_1') \oplus \Big( \bigoplus_{i=1}^t \hom_R (F_2 , Re^i_0) \Big)$$
write $d_2 = d_2' + d_0^1 + \cdots + d^t_0$, where $d_2' \in \hom_R (F_2,F_1')$ and $d^i_0 \in \hom_R (F_2 , Re^i_0)$.  For each $i=1, \dotsc , t$, let $\mfa_i$ denote any ideal with
$$d^i_0 (F_2) \subseteq \mfa_i e^i_0,$$
and $(G^i_\bullet , m^i_\bullet)$ be a free resolution of $R/\mfa_i$. Use the notation $K' := \im (d_1|_{F_1'} : F_1' \to R)$, $K^i_0 := \im (d_1|_{Re^i_0} : Re^i_0 \to R)$, and let $J := K' + \mfa_1 \cdot K^1_0+ \cdots + \mfa_t \cdot K_0^t$.
\end{setup}

\begin{remark}
In Setup \ref{set:trimmingcxSetup}, one may alternatively assume that $R$ is a standard graded polynomial ring over a field $k$ and all over input data is homogeneous.
\end{remark}

\begin{theorem}[\cite{vandebogert2020trimming}]\label{itres}
Adopt notation and hypotheses as in Setup \ref{set:trimmingcxSetup}. Then there exists a morphism of complexes of the following form:
\begin{equation}\label{itcomx}
\xymatrix{\cdots \ar[r]^{d_{k+1}} &  F_{k} \ar[ddd]^{\begin{pmatrix} q_{k-1}^1 \\
\vdots \\
q_{k-1}^t \\
\end{pmatrix}}\ar[r]^{d_{k}} & \cdots \ar[r]^{d_3} & F_2 \ar[rrrr]^{d_2'} \ar[ddd]^{\begin{pmatrix} q_1^1 \\
\vdots \\
q_1^t \\
\end{pmatrix}} &&&& F_1' \ar[ddd]^{d_1'} \\
&&&&&&& \\
&&&&&&& \\
\cdots \ar[r]^-{\bigoplus m^i_k} & \bigoplus_{i=1}^t G^i_{k-1} \ar[r]^-{\bigoplus m^i_{k-1}} & \cdots \ar[r]^-{\bigoplus m^i_2} & \bigoplus_{i=1}^t G^i_1 \ar[rrrr]^-{-\sum_{i=1}^t  d_1(e^i_0)m_1^i} &&&& R \\}\end{equation}
where $d_1' : F_1 \to R$ is the restriction of $d_1$ to $F_1$. Moreover, the mapping cone of \ref{itcomx} is a free resolution of $R/J$. The mapping cone of \ref{itcomx} will be denoted $T_\bullet$ with differentials $\ell_\bullet$.
\end{theorem}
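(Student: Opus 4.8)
The plan is to read the array \eqref{itcomx} as an honest morphism of chain complexes $\phi\colon A\to B$, where $A$ is the ``trimmed truncation''
\[
A=\bigl(\cdots \to F_3 \xrightarrow{d_3} F_2 \xrightarrow{d_2'} F_1'\bigr),
\]
with $F_1'$ placed in homological degree $0$ and $F_{n+1}$ in degree $n$, and $B$ is
\[
B=\bigl(\cdots \to {\textstyle\bigoplus_i} G_2^i \xrightarrow{\oplus_i m^i_2} {\textstyle\bigoplus_i} G_1^i \xrightarrow{-\sum_i d_1(e_0^i)m_1^i} R\bigr),
\]
with $R$ in degree $0$ and $\bigoplus_i G_n^i$ in degree $n$. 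That $B$ is a complex is immediate from $m_1^i m_2^i=0$; that $A$ is a complex follows because $d_2 d_3=0$ while, under $F_1=F_1'\oplus\bigoplus_i Re_0^i$, the $F_1'$- and $Re_0^i$-components of $d_2 d_3$ vanish separately. Writing $d_0^i=\delta^i\!\cdot e_0^i$ with $\delta^i\in\hom_R(F_2,R)$ (so $\im\delta^i\subseteq\mfa_i$) and expanding $d_1 d_2=0$, $d_2 d_3=0$ along the decomposition of $F_1$ yields the two identities $d_1'd_2'=-\sum_i d_1(e_0^i)\,\delta^i$ and $\delta^i d_3=0$, which I will use repeatedly.

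\textbf{Step 1: constructing the maps $q_n^i$.} Since $m_1^i\colon G_1^i\to\mfa_i$ is surjective and $F_2$ is free, lift $\delta^i$ to $q_1^i\colon F_2\to G_1^i$ with $m_1^i q_1^i=\delta^i$; the first identity above is exactly the statement that the leftmost square of \eqref{itcomx} commutes. For $n\geq 2$, build $q_n^i\colon F_{n+1}\to G_n^i$ by the standard comparison-map induction: one has $m_{n-1}^i(q_{n-1}^i d_{n+1})=0$ (this is $\delta^i d_3=0$ when $n=2$, and $q_{n-2}^i d_n d_{n+1}=0$ when $n\geq 3$), so $q_{n-1}^i d_{n+1}$ factors through $\im m_n^i=\ker m_{n-1}^i$, and freeness of $F_{n+1}$ gives $q_n^i$ with $m_n^i q_n^i=q_{n-1}^i d_{n+1}$ — precisely the commutativity of the remaining squares. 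Hence $\phi$ is a chain map of the asserted shape.

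\textbf{Step 2: the mapping cone resolves $R/J$.} Let $T=\cone(\phi)$, so $T_0=R$, $T_1=F_1'\oplus\bigoplus_i G_1^i$, and $T_n=F_n\oplus\bigoplus_i G_n^i$ for $n\geq 2$; being a complex of free modules, only its homology is at issue, which I would read off from the mapping-cone long exact sequence
\[
\cdots\to H_n(B)\to H_n(T)\to H_{n-1}(A)\xrightarrow{\ \pm H_{n-1}(\phi)\ }H_{n-1}(B)\to\cdots .
\]
As $F$ and each $G^i$ are resolutions, $H_n(A)=0=H_n(B)$ for $n\geq 2$; also $H_0(B)=R/\sum_i\mfa_i K_0^i$, and the surjections $m_1^i$ identify $H_1(B)\cong\{(a^i)_i\in\bigoplus_i\mfa_i:\sum_i d_1(e_0^i)a^i=0\}$. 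It then remains to show (i) $H_0(\phi)\colon F_1'/d_2'(F_2)\to R/\sum_i\mfa_i K_0^i$ is injective, and (ii) $H_1(\phi)\colon \ker(d_2')/\im(d_3)\to H_1(B)$, $[x]\mapsto(\delta^i(x))_i$, is an isomorphism. For (i): if $d_1'(x)=\sum_i b^i d_1(e_0^i)$ with $b^i\in\mfa_i$, then $x-\sum_i b^i e_0^i\in\ker d_1=\im d_2$, and comparing $F_1'$-components of a $d_2$-preimage gives $x\in d_2'(F_2)$. For (ii): injectivity, because $\delta^i(x)=0$ for all $i$ together with $d_2'(x)=0$ forces $x\in\ker d_2=\im d_3$; surjectivity, because a relation $\sum_i d_1(e_0^i)a^i=0$ with $a^i\in\mfa_i$ makes $\sum_i a^i e_0^i$ lie in $\ker d_1=\im d_2$, and any $y\in F_2$ with $d_2(y)=\sum_i a^i e_0^i$ automatically satisfies $d_2'(y)=0$ and $\delta^i(y)=a^i$. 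Feeding (i), (ii) and the vanishing into the long exact sequence gives $H_n(T)=0$ for $n\geq 1$ and $H_0(T)=\coker H_0(\phi)=R/(K'+\sum_i\mfa_i K_0^i)=R/J$.

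\textbf{The main obstacle.} Step 1 and the vanishing statements are routine; the real content is the surjectivity half of (ii) — promoting a syzygy of the $d_1(e_0^i)$ whose coefficients are forced into the $\mfa_i$ to an actual cycle of $\ker(d_2')\subseteq F_2$. What makes this go through is that the splitting $F_1=F_1'\oplus\bigoplus_i Re_0^i$ kills the $F_1'$-component of any $d_2$-preimage of an element of $\bigoplus_i Re_0^i$, so exactness of the \emph{original} resolution $F$ at $F_1$ is exactly the input needed. I would package the two homology comparisons (i) and (ii) as a lemma before invoking the long exact sequence.
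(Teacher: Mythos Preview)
The paper does not prove Theorem~\ref{itres}; it is imported verbatim from \cite{vandebogert2020trimming} and stated without argument. Consequently there is no in-paper proof to compare against.

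That said, your argument is correct and is exactly the standard way one establishes such a result. The construction of the $q_n^i$ by the comparison lemma (Step~1) is routine, and your Step~2 is a clean unwinding of the mapping-cone long exact sequence. The two key computations are precisely (i) and (ii); your proofs of both are right, and your identification of the surjectivity half of (ii) as the only nonformal point is accurate --- it is exactly where exactness of the original resolution $F_\bullet$ at $F_1$ is consumed. One small cosmetic remark: in the diagram of the theorem the top row has no $\bigoplus_i K_3^i$-term in degree beyond the length of $F_\bullet$, so strictly speaking your complex $A$ is bounded above (the $F_k$ eventually vanish), but this does not affect the argument. Your write-up would serve perfectly well as a self-contained proof in place of the citation.
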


\begin{definition}\label{def:ittrimcx}
The \emph{iterated trimming complex} associated to the data of Setup \ref{set:trimmingcxSetup} is the complex of Theorem \ref{itres}.
\end{definition}

The following theorem is not stated in its entirety as in \cite{vandebogert2020dg}; this is because writing out all of the products is quite technical and not totally enlightening. In our situation, the more closed forms of the products given by Proposition $3.4$ of \cite{vandebogert2020dg} will be used, since the DG-module hypotheses will be trivially verified for our purposes. The reader is encouraged to consult \cite{vandebogert2020dg} for the more grisly details.

\begin{theorem}\label{thm:DGlength3}
Adopt notation and hypotheses as in Setup \ref{set:trimmingcxSetup}, and assume that the complexes $F_\bullet$ and $G_\bullet^i$ ($1 \leq i \leq t$) are length $3$ DG-algebras. Then the length 3 iterated trimming complex $T_\bullet$ of Theorem \ref{itres} admits the structure of an associative DG-algebra, and the product on $T_\bullet$ can be written down in terms of the products on $F_\bullet$ and $G_\bullet$.
\end{theorem}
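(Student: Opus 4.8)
The plan is to build the multiplication directly on the mapping cone description of $T_\bullet$ from Theorem~\ref{itres}. Because $F_\bullet$ and the $G^i_\bullet$ have length $3$, so does $T_\bullet$, and the cone decomposes in each degree as $T_0 = R$, $T_1 = F_1' \oplus \bigl(\bigoplus_{i=1}^t G^i_1\bigr)$, and $T_n = F_n \oplus \bigl(\bigoplus_{i=1}^t G^i_n\bigr)$ for $n = 2,3$. In a DG-algebra concentrated in degrees $0,\dots,3$ every product of elements of total degree at least $4$ must vanish, so the whole multiplication is determined by the two bilinear maps $T_1 \times T_1 \to T_2$ and $T_1 \times T_2 \to T_3$, together with the tautological $R$-module structure. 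So the task reduces to writing these two maps as explicit combinations of the structure maps of $F_\bullet$, of the $G^i_\bullet$, and of the maps $q^i_\bullet$ appearing in \eqref{itcomx}, and then checking the axioms.

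For products landing entirely inside $F_\bullet$ I would use the product on the DG-algebra $F_\bullet$, postcomposed with the projection $F_1 \twoheadrightarrow F_1'$ whenever the $F$-product lands in $F_1$; for products inside a single $G^i_\bullet$ I would use the product on $G^i_\bullet$; and for an element of $G^i$ times an element of $G^j$ with $i \neq j$ I would take the product to be $0$, there being no natural target. The real content lies in the mixed products between $F_\bullet$ and a $G^i_\bullet$: these should be obtained by letting $F_\bullet$ act on $G^i_\bullet$ through the comparison maps $q^i_\bullet$ and then multiplying inside $G^i_\bullet$, in the closed form recorded in Proposition~$3.4$ of \cite{vandebogert2020dg}. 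Since the comparison map presenting $T_\bullet$ as a mapping cone is not literally a map of DG-algebras, each of these rules has to be corrected by a homotopy term valued in $\bigoplus_{i=1}^t G^i_\bullet$; such a correction exists because each $G^i_\bullet$ is a resolution, hence acyclic in positive homological degrees, and under the hypotheses relevant to this paper (the ``DG-module hypotheses'' alluded to above) these correction terms vanish or are themselves expressed through the products on $F_\bullet$ and the $G^i_\bullet$ --- which is what is meant by saying the product ``can be written down in terms of the products on $F_\bullet$ and $G_\bullet$.''

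It then remains to verify the axioms of Definition~\ref{def:dga}. Graded-commutativity and the vanishing of squares of odd-degree elements are inherited from $F_\bullet$ and the $G^i_\bullet$ once the mixed products are arranged symmetrically. The Leibniz rule is checked degree by degree: one applies the differential $\ell_\bullet$ of $T_\bullet$ to each product and expands using the Leibniz rules on $F_\bullet$ and the $G^i_\bullet$ together with the chain-map identities satisfied by the $q^i_\bullet$; the naive combination then satisfies Leibniz up to an error valued in $\bigoplus_{i=1}^t G^i_\bullet$, and the homotopy corrections are chosen precisely to cancel it. Finally, since $T_\bullet$ has length $3$, the only nontrivial instance of associativity is $(xy)z = x(yz)$ for $x,y,z \in T_1$, with both sides in $T_3$; this can be verified directly from the formulas, or deduced from the classical theorem of Buchsbaum and Eisenbud that every free resolution of length $\leq 3$ of a cyclic module admits an associative DG-algebra structure, which also gives the first assertion. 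I expect the Leibniz verification for the mixed products, together with the identification of the correction homotopies and the consistent bookkeeping of signs across the $t$ summands $G^1_\bullet, \dots, G^t_\bullet$, to be the main obstacle --- this is the technical heart of \cite{vandebogert2020dg} --- whereas (anti)commutativity and associativity are comparatively painless here simply because length $3$ leaves so little room for products.
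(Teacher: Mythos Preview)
This theorem is not proved in the present paper; it is quoted from \cite{vandebogert2020dg}, with the explicit product formulas deliberately omitted here. Your outline is essentially a faithful sketch of the strategy carried out in that reference (and you say as much, invoking Proposition~3.4 of \cite{vandebogert2020dg} and identifying the Leibniz verification for mixed products as the technical heart). So there is nothing to compare against in this paper, and your proposal matches the cited source's approach.

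One small caution: your appeal to Buchsbaum--Eisenbud at the end establishes that \emph{some} associative DG structure exists on $T_\bullet$, but not that \emph{the particular product you wrote down} is associative. Since the content of the theorem is precisely that the explicit product built from $\cdot_F$ and the $\cdot_{G^i}$ works, you cannot offload associativity to the abstract existence result; it must be checked directly for your formulas (which, as you note, is a short computation in length $3$).
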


\begin{cor}[{\cite[Corollary 3.6]{vandebogert2020dg}}]\label{cor:nontrivialMults}
Adopt notation and hypotheses as in Setup \ref{set:trimmingcxSetup}. Assume that the complexes $F_\bullet$ and $G_\bullet^i$ (for $1 \leq i \leq t$) are minimal. Then the only possible nontrivial products in the algebra $\overline{T_\bullet}$ are
$$\overline{F_1'} \cdot_T \overline{F_1'}, \quad \overline{F_1'} \cdot_T \overline{F_2}, \quad \overline{F_1'} \cdot_T \overline{G_1^i} \quad \overline{G_1^i} \cdot_T \overline{F_2}, \quad \textrm{and} \quad \overline{F_1'} \cdot_T \overline{G_2^i} \quad (1 \leq i \leq t).$$
\end{cor}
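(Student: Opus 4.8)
The plan is to read off the conclusion directly from the explicit product formulas provided by Theorem \ref{thm:DGlength3} (equivalently, Proposition 3.4 of \cite{vandebogert2020dg}), using minimality of $F_\bullet$ and the $G_\bullet^i$ to kill the bulk of the terms. Recall that as a graded module the iterated trimming complex $T_\bullet$ is the mapping cone of the morphism in \eqref{itcomx}, so in each homological degree $n$ one has $T_n = F_n \oplus \bigl(\bigoplus_{i=1}^t G_{n-1}^i\bigr)$ for $n \geq 2$, while $T_1 = F_1'$ and $T_0 = R$. Concretely the homogeneous components contributing to positive degree are $F_1'$, $F_2$, $F_3$ in degrees $1,2,3$ and $G_1^i$, $G_2^i$ in degrees $2,3$. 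To compute $\overline{T_\bullet} = k \otimes_R T_\bullet$ we need only track which products $T_a \cdot_T T_b \to T_{a+b}$, with $a,b \geq 1$, have image not contained in $\m T_{a+b}$.

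The first step is to enumerate the possible nonzero products by degree count: since $T_\bullet$ has length $3$, the only bidegrees $(a,b)$ with $a,b\geq 1$ and $a+b\leq 3$ are $(1,1)$, $(1,2)$ and $(2,1)$; by graded-commutativity (Definition \ref{def:dga}(c)) it suffices to treat $(1,1)$ and $(1,2)$. In bidegree $(1,1)$ the only summand available is $F_1'\cdot_T F_1'$, giving the term $\overline{F_1'}\cdot_T\overline{F_1'}$. In bidegree $(1,2)$ the source is $F_1' \otimes \bigl(F_2 \oplus \bigoplus_i G_1^i\bigr)$, which accounts for the terms $\overline{F_1'}\cdot_T\overline{F_2}$ and $\overline{F_1'}\cdot_T\overline{G_1^i}$; the remaining listed terms $\overline{G_1^i}\cdot_T\overline{F_2}$ and $\overline{F_1'}\cdot_T\overline{G_2^i}$ arise in bidegree $(2,1)$ and $(1,2)$ respectively from the corresponding summands of $T_2$ and $T_3$. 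The second step is to argue that every product \emph{not} on this list vanishes after applying $k\otimes_R -$: products of the form $\overline{F_2}\cdot_T\overline{F_2}$ would land in degree $4 = 0$ in $\overline{T}$; products $\overline{G_1^i}\cdot_T\overline{G_1^j}$, $\overline{G_1^i}\cdot_T\overline{G_2^j}$ and the like either land in degree $\geq 4$ or — this is the content one imports from the product formulas of \cite{vandebogert2020dg} — are built out of the multiplications $m^i_\bullet$ on the $G^i_\bullet$ composed with comparison maps $q^i_\bullet$ that are given by entries of the differentials $d_\bullet$, hence all such products have coefficients in $\m$ once $F_\bullet$ and the $G^i_\bullet$ are minimal. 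Similarly any product involving a factor from a summand $G^i_0 \subseteq$ (nothing, since $G^i_\bullet$ starts in degree $0$ contributing only to $T_0=R$) or cross-terms $G^i \cdot G^j$ for $i \neq j$ vanish, and the ``internal'' $F$-products $\overline{F_1'}\cdot_T\overline{F_3}$, etc., land in degree $\geq 4$.

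The step I expect to be the main obstacle is the second one: verifying that the products \emph{omitted} from the list genuinely die modulo $\m$. This is where one must actually unwind the (admittedly technical) closed-form product formulas of Proposition 3.4 of \cite{vandebogert2020dg} and observe that each excluded product is a sum of composites in which at least one factor is either a structure map of a minimal complex evaluated on something in the maximal ideal, or a comparison map $q^i_\bullet$ whose matrix entries lie in $\m$ by minimality of $F_\bullet$ (the $q^i_\bullet$ are obtained by lifting along the differentials, which have entries in $\m$). Granting that bookkeeping, the displayed list is exactly the set of surviving products, and the corollary follows. I would present the argument by first recording the module decomposition of $T_\bullet$ and the degree constraints, then citing the product formulas and checking the $\m$-divisibility claim summand by summand.
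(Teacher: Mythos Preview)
The paper does not prove this corollary here; it is quoted directly from \cite[Corollary~3.6]{vandebogert2020dg}, so there is no in-paper argument to compare against. Your overall plan---read off the surviving products from the explicit DG-algebra formulas of Theorem~\ref{thm:DGlength3} and kill the rest using minimality of $F_\bullet$ and the $G_\bullet^i$---is exactly the approach that reference takes.

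However, your module decomposition of $T_\bullet$ is off by one in the $G$-summands, and this breaks your degree-counting step. The mapping cone of the morphism in Theorem~\ref{itres} has
\[
T_1 = F_1' \oplus \Bigl(\bigoplus_{i} G_1^i\Bigr),\qquad
T_2 = F_2 \oplus \Bigl(\bigoplus_i G_2^i\Bigr),\qquad
T_3 = F_3 \oplus \Bigl(\bigoplus_i G_3^i\Bigr);
\]
that is, $G_j^i$ contributes to $T_j$, not to $T_{j+1}$. You can read this off, for instance, from the proof of Proposition~\ref{prop:golodProdforleq3}, where the product $\overline{G_1^i}\cdot_T\overline{F_2}$ is written concretely as a map $(F_1\otimes K_1)\otimes K_2 \to (F_3\otimes K_1)\oplus K_3$ landing in degree $3$; this forces $G_1^i$ to sit in degree $1$. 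With your indexing, both $\overline{G_1^i}\cdot_T\overline{F_2}$ and $\overline{F_1'}\cdot_T\overline{G_2^i}$ would land in degree $4$ and hence be trivially zero, contradicting the very statement of the corollary, which lists them as \emph{possibly nontrivial}. (Your assertion that $\overline{G_1^i}\cdot_T\overline{F_2}$ lives in bidegree $(2,1)$ is also internally inconsistent, since $F_2$ has degree $2$.)

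Once the grading is corrected, your claim that ``in bidegree $(1,1)$ the only summand available is $F_1'\cdot_T F_1'$'' is false: one also has $F_1'\cdot_T G_1^i$ and $G_1^i\cdot_T G_1^j$. The first of these is on the list; the second is the genuine content that must be checked. It does \emph{not} vanish for degree reasons---it must be shown to lie in $\m T_2$ via the explicit product formulas of \cite{vandebogert2020dg}, using minimality. Similarly, $G_1^i\cdot_T G_2^j$ lands in degree $3$ and must be killed the same way. After you fix the grading and move these $G\cdot G$ products from the ``trivial by degree'' bin to the ``trivial by minimality plus the product formulas'' bin, the rest of your plan goes through.
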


Let $A_\bullet$ and $B_\bullet$ be complexes with $H_0 (A) = R$ and $H_0 (B) = S$. Recall that there is a functorial isomorphism
$$H_\bullet (A \otimes S) \cong H_\bullet (R \otimes B)$$
induced by the natural projections
\[\begin{tikzcd}
	A & {A \otimes B} & B
	\arrow[from=1-2, to=1-1]
	\arrow[from=1-2, to=1-3]
\end{tikzcd}\]
The isomorphism can be explicitly described as follows:
\begin{enumerate}
    \item Choose a cycle $z_1$ in $A \otimes S$ representing a basis element of $H_\bullet (A \otimes S)$.
    \item Lift $z_1$ to a cycle $z_2$ in $A \otimes B$.
    \item Project $z_2$ onto a cycle $z_3$ in $R \otimes B$, then descend to homology.
\end{enumerate}

The following observation shows that the contribution of the ideals $\mfa_i$ to the Koszul homology of the trimmed ideal is completely determined by the Koszul homology $H_\bullet (R / \mfa_i)$, for each $1 \leq i \leq t$.

\begin{obs}\label{obs:KoszulLifts}
Adopt notation and hypotheses as in Setup \ref{set:trimmingcxSetup}, and assume that the complexes $F_\bullet$ and $G_\bullet^i$ ($1 \leq i \leq t$) are minimal. Let $\phi^i : G^i_\bullet \to R / \mfa_i \otimes K_\bullet$ denote any map lifting the isomorphism $G^i_\bullet \otimes k \cong H_\bullet (R/ \mfa_i \otimes K_\bullet)$. 

Then, the isomorphism $H_\bullet (T_\bullet \otimes k) \cong H_\bullet (R/J \otimes K_\bullet)$ sends $\overline{g_j^i} \in G^i_j \otimes k$ to the element $[\phi^i (g_j^i) d_1 (e_0^i)] \in H_j (R/J \otimes K_\bullet)$.
\end{obs}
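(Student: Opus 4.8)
The plan is to trace the element $\overline{g_j^i}$ through the explicit three-step recipe for the isomorphism $H_\bullet(A\otimes S)\cong H_\bullet(R\otimes B)$ recalled above, applied with $A=T_\bullet$, $S=k$, $B=K_\bullet$ the Koszul complex on a minimal generating set of $\m$, so that $H_0(A)=R/J$ and $H_0(B)=R$. The first task is to produce a cycle in $T_\bullet\otimes k$ representing $\overline{g_j^i}$; since $T_\bullet$ is the mapping cone of \eqref{itcomx} and $G_\bullet^i$ is minimal, the summand $G^i_j$ sits inside $T_{j+1}$ and the element $g_j^i$ (lifted from $G^i_j\otimes k$) is already a cycle modulo $\m$, because every differential leaving it lands in $\m\cdot T$ (the differentials of $T$ restricted to the $G$-part are $\bigoplus m^i_\bullet$, which are minimal, together with the connecting term built from $q_\bullet^i$, whose entries lie in $\m$ by minimality of the $F$-resolution). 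So $\overline{g_j^i}$ is the class of $g_j^i\in T_{j+1}\otimes k$.

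Next I would lift $g_j^i$ to a cycle in $T_\bullet\otimes K_\bullet$. Here is where the map $\phi^i\colon G^i_\bullet\to R/\mfa_i\otimes K_\bullet$ enters: by construction $\phi^i$ is a lift of the comparison map between the minimal resolution $G^i_\bullet$ of $R/\mfa_i$ and the Koszul complex, so $\phi^i(g_j^i)$ is a chain in $(R/\mfa_i\otimes K)_j$ whose boundary equals $\phi^i(m^i_j(g_j^i))$ (a cycle coming from the next stage of the resolution, which vanishes after reducing modulo $\m$). The point is that the iterated trimming complex is assembled precisely so that the copy of $G^i_\bullet$ is "twisted" into $T_\bullet$ via multiplication by $d_1(e_0^i)$ — this is visible in the bottom row of \eqref{itcomx}, where the augmentation of the $G$-part is $-\sum_i d_1(e_0^i)m_1^i$. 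Consequently the natural way to lift $g_j^i$ across $T\otimes K\to T\otimes k$ is to take $\phi^i(g_j^i)$ and multiply by the Koszul cycle representing $d_1(e_0^i)$; one checks that the resulting element of $T_\bullet\otimes K_\bullet$ is a genuine cycle, using the Leibniz rule together with the defining relations of the trimming differential $\ell_\bullet$ (the boundary terms cancel against the image of $q_\bullet^i$ and the twist $d_1(e_0^i)$).

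Finally I would project this cycle onto $R/J\otimes K_\bullet$ and read off its homology class, which by the previous paragraph is exactly $[\phi^i(g_j^i)\,d_1(e_0^i)]\in H_j(R/J\otimes K_\bullet)$, as claimed. The main obstacle is the middle step: verifying that $\phi^i(g_j^i)\cdot(\text{Koszul cycle for }d_1(e_0^i))$ is actually a cycle in $T_\bullet\otimes K_\bullet$ and that its two projections agree with $\overline{g_j^i}$ on one side and $[\phi^i(g_j^i)d_1(e_0^i)]$ on the other. This requires being careful about the explicit form of the differential $\ell_\bullet$ of the mapping cone $T_\bullet$ (in particular the connecting maps $q_\bullet^i$ of Theorem \ref{itres}) and the behavior of the tensor differential, but it is a bookkeeping computation rather than a conceptual difficulty; the conceptual content is entirely contained in the observation that the $G^i$-summands contribute to $R/J$ through the "scalar" $d_1(e_0^i)$, which is already built into \eqref{itcomx}. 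One should also note that minimality of $F_\bullet$ and the $G^i_\bullet$ is used twice: once to ensure $g_j^i$ is a cycle mod $\m$ in step (1), and once to guarantee that the auxiliary boundary terms $\phi^i(m^i_j(g_j^i))$ and the $q_\bullet^i$-contributions vanish after reduction, so that no correction terms survive in the final homology class.
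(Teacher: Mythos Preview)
Your approach is essentially the paper's own, just unpacked: the paper's proof is two sentences, observing that the $G^i_\bullet$-summand of $T_\bullet$ differs from the standalone complex $G^i_\bullet$ only in that its augmentation to $R$ is rescaled by $d_1(e_0^i)$, so the three-step comparison recipe applied to that summand is exactly $\phi^i$ followed by multiplication by $d_1(e_0^i)$.

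Two small corrections to your write-up, neither fatal. First, $G^i_j$ sits inside $T_j$, not $T_{j+1}$; the target $H_j(R/J\otimes K_\bullet)$ already tells you this, and you can read it off the diagram \eqref{itcomx} once you align $R$ with degree $0$. Second, your parenthetical about the $q^i_\bullet$-maps is confused: in the mapping cone the connecting maps go from the $F$-part \emph{to} the $G$-part, so the differential leaving $G^i_j\subset T_j$ is just $m^i_j$, and minimality of $G^i_\bullet$ alone makes $\overline{g^i_j}$ a cycle. (The entries of $q^i_\bullet$ need not lie in $\m$ in general, and minimality of $F_\bullet$ does not force this; fortunately you do not need it.) With those fixes your bookkeeping argument goes through and matches the paper's shortcut.
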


\begin{proof}
Observe that the only difference between $G_\bullet^i$ on its own and $G_\bullet^i$ viewed as a direct summand in the trimming complex $T_\bullet$ is that the first differential is rescaled by the element $d_1 (e_0^i)$ in the latter case. By definition of the isomorphism $H_\bullet (T_\bullet \otimes k) \cong H_\bullet (R/J \otimes K_\bullet)$, the result is immediate.
\end{proof}

\section{Products of the Form $I \m$ define Golod Rings in $3$-dimensional regular local rings}\label{sec:ofFormIm}

In this section, we prove that any product of the form $I \m$ defines a Golod ring if $I$ is a proper ideal in some $3$-dimensional regular local ring. We first use a simple observation about the generators of the Koszul homology to see that the possible Tor-algebra classes of a general product of ideals is rather restricted. We then recall some additional results proved in \cite{vandebogert2020dg} that will be essential for proving the main result. Observe that
\begin{center}
    $(*) \quad$ one can replace the regular local ring $R$ with a standard graded polynomial ring over a field $k$ for all results in this paper.
\end{center}
To begin with, we prove a general result about the Koszul homology algebra for products.

\begin{obs}\label{obs:trivialDeg1}
Let $R$ be a regular local ring and let $I$ and $J$ be two proper ideals. Then $H_1 (R/IJ) \cdot H_1 (R/IJ) = 0$.
\end{obs}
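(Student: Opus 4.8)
The plan is to reduce to a computation about products of degree-1 Koszul cycles and exploit the fact that every degree-1 Koszul homology class of $R/IJ$ is represented by a cycle $\sum_k c_k f_k$ whose coefficients $c_k$ lie in the ideal $IJ$. Recall that $H_1(R/IJ \otimes K) \cong (IJ)/(\m \cdot IJ)$ via the connecting map, and a cycle representing the class of $a \in IJ$ in $K_1 = \bigoplus R f_k$ can be taken to be $z_a = \sum_k a_k f_k$ where $a = \sum_k a_k x_k$ is any expression of $a$ in terms of the regular system of parameters $x_1,\dots,x_e$ generating $\m$. Since $a \in IJ$ and $IJ = I \cdot J$, I would first observe that (modulo $\m \cdot IJ$, which is what $H_1$ sees) one may choose such an expression with all coefficients $a_k$ lying in $I$ — indeed $a$ is a sum of products $bc$ with $b \in I$, $c \in J$, and writing $c = \sum_k c_k x_k$ gives $a = \sum_k (\sum bc_k) x_k$ with $\sum bc_k \in I$. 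By symmetry one may equally choose all coefficients in $J$.

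Next I would take two degree-1 homology classes $h = [z]$, $h' = [z']$ and compute $z \cdot z'$ in the Koszul complex $K = K^{R/IJ}$. Writing $z = \sum_k a_k f_k$ with $a_k \in I$ (using the first choice for $z$) and $z' = \sum_l b_l f_l$ with $b_l \in J$ (using the second choice for $z'$), the product is $z \cdot z' = \sum_{k<l} (a_k b_l - a_l b_k) f_k \wedge f_l \in K_2$. Each coefficient $a_k b_l - a_l b_k$ lies in $I J = IJ$. The key point is then that this coefficient, viewed in $R/IJ$, is zero: every entry of $z \cdot z'$ is a sum of products of an element of $I$ with an element of $J$, hence lies in $IJ$, hence vanishes in the coefficient ring $R/IJ$ over which the Koszul complex $K^{R/IJ}$ is built. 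Therefore $z \cdot z' = 0$ in $K_2^{R/IJ}$ already on the nose, so a fortiori $h \cdot h' = [z \cdot z'] = 0$ in $H_2(R/IJ)$. Since the degree-1 classes $h, h'$ were arbitrary and $H_1$ is spanned by such classes, this gives $H_1(R/IJ) \cdot H_1(R/IJ) = 0$.

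The one genuine subtlety — and the step I would be most careful about — is the bookkeeping in the first paragraph: the claim "every class in $H_1$ has a representing cycle with coefficients in $I$" is a statement modulo $\m \cdot IJ$, and I need the two representatives $z$ (coefficients in $I$) and $z'$ (coefficients in $J$) to simultaneously represent the given classes while their coefficients come from the two different factors. This is fine because changing a representative of a degree-1 class amounts to adding a boundary $d_2(K_2)$ plus something in $\m K_1$, and neither operation affects the argument: it suffices to exhibit, for each class, \emph{one} cycle with coefficients in $I$ and \emph{one} with coefficients in $J$, and then pair the "$I$-representative" of $h$ against the "$J$-representative" of $h'$. Once this is set up correctly the computation $z \cdot z' \in IJ \cdot K_2 = 0$ over $R/IJ$ is immediate, and no higher Massey machinery or the trimming-complex apparatus is needed — this is a direct elementary argument with the Koszul complex.
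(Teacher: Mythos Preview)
Your proof is correct and follows essentially the same approach as the paper's: represent one degree-$1$ class by a cycle with coefficients in $I$ and the other by a cycle with coefficients in $J$, so that the product has coefficients in $IJ$ and is already zero in $K_2^{R/IJ}$. The paper phrases the representatives as $d(z_1^I)\, z_1^J$ (with $d(z_1^I)\in I$) and symmetrically for $J$, while you write them out in coordinates as $\sum_k a_k f_k$ with $a_k \in I$; these are the same cycles, and your version simply makes explicit the step the paper calls ``clear''.
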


\begin{proof}
It is clear that $H_1 (R/IJ)$ is generated by the classes of elements of the form $d(z_1^I) z_1^J$, where $z_1^I$ and $z_1^J$ represent basis elements for $H_1 (R/I)$ and $H_1 (R/J)$, respectively. Now take the product and descend to homology. 
\end{proof}

Observe that by, for instance, \cite[Proposition 5.2.4]{avramov1998infinite}, it suffices to show that the multiplication in the Tor-algebra of $R/IJ$ is trivial to prove Golodness (in the case that $R / IJ$ has projective dimension $3$). This will end up being the method we use to prove Golodness for ideals of the form $I \m$.

\begin{cor}\label{cor:torAlgProd}
Let $R$ be a $3$-dimensional regular local ring and $I$, $J \subset R$ proper ideals. If $R/IJ$ has projective dimension $3$, then $R/IJ$ has Tor algebra class $G(r)$ or $H (0,q)$ for some $r \geq 2$ or $q \geq 0$.
\end{cor}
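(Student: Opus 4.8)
The plan is to invoke the classification of Tor-algebra structures for codepth-$3$ local rings, due to Avramov--Kustin--Miller (as presented in Avramov's survey and refined by work of Avramov, Kustin--Miller, and Weyman), and then use Observation~\ref{obs:trivialDeg1} to eliminate all classes that require a nonzero product $H_1\cdot H_1$. Recall that when $R/IJ$ has projective dimension $3$ over the regular local ring $R$, the Koszul homology algebra $A = H_\bullet(K^{R/IJ})$ is a graded-commutative $k$-algebra with Poincaré duality in the top degree (since $R/IJ$ is at worst of codimension $3$ and the Koszul complex resolves it), and the possible multiplicative structures fall into the finite list of classes $\mathbf{B}$, $\mathbf{C}(3)$, $\mathbf{G}(r)$ ($r\geq 2$), $\mathbf{H}(p,q)$ ($p,q\geq 0$), and $\mathbf{T}$. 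So first I would recall this classification precisely, noting the defining multiplicative relations of each class in terms of the ranks of $A_1, A_2, A_3$ and the products $A_1\cdot A_1 \to A_2$, $A_1\cdot A_2\to A_3$.

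Next I would go through the list and isolate which classes are compatible with $A_1\cdot A_1 = 0$. By Observation~\ref{obs:trivialDeg1}, $H_1(R/IJ)\cdot H_1(R/IJ)=0$, i.e. the product $A_1\cdot A_1\to A_2$ vanishes identically. Inspecting the classification: class $\mathbf{T}$ forces $A_1\cdot A_1$ to have rank $3$ (it is the class of a complete intersection-type structure where $\Lambda^2 A_1 \twoheadrightarrow A_2$), class $\mathbf{B}$ requires a nonzero product $A_1\cdot A_1\to A_2$, and class $\mathbf{C}(3)$ likewise has $A_1\cdot A_1\neq 0$ (it is the structure coming from a codimension-$3$ complete intersection, where the multiplication is that of an exterior algebra on three degree-$1$ generators). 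Class $\mathbf{H}(p,q)$ with $p\geq 1$ also requires $A_1\cdot A_1\neq 0$ by definition of the parameter $p$, which counts exactly the rank of that product. What survives is: $\mathbf{G}(r)$ for $r\geq 2$ (where $A_1\cdot A_1=0$, $A_1\cdot A_2\neq 0$, and $A_2\cdot A_2$ governed by $r$), and $\mathbf{H}(0,q)$ for $q\geq 0$ (where $A_1\cdot A_1=0$ and $p=0$). This is exactly the claimed dichotomy.

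The step I expect to be the main obstacle — or at least the one requiring the most care — is making sure the classification theorem is being applied under the correct hypotheses: specifically, that $R/IJ$ genuinely has codepth $3$ (equivalently projective dimension exactly $3$ over $R$, which is assumed), so that the Koszul homology algebra is the one classified, and that Observation~\ref{obs:trivialDeg1} indeed says the full product $A_1\cdot A_1$ vanishes and not merely that it vanishes on a spanning set (which it does, since the classes $[d(z_1^I)z_1^J]$ span $A_1$ and the product is bilinear, so vanishing on generators gives vanishing everywhere — a one-line check). Once those two points are nailed down, the corollary is simply the observation that $\mathbf{G}(r)$ ($r\geq 2$) and $\mathbf{H}(0,q)$ ($q\geq 0$) are precisely the classes in the Avramov--Kustin--Miller list with trivial degree-$1$ multiplication. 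I would also remark, for completeness, that the class $\mathbf{G}(1)$ does not occur (it is known to be obstructed), which is why the statement has $r\geq 2$ rather than $r\geq 1$.
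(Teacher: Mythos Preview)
Your proposal is correct and follows essentially the same route as the paper: invoke the Avramov--Kustin--Miller classification of Tor-algebra structures in codepth~$3$ and use Observation~\ref{obs:trivialDeg1} to rule out every class with nonzero $A_1\cdot A_1$, leaving only $\mathbf{G}(r)$ for $r\geq 2$ and $\mathbf{H}(0,q)$ for $q\geq 0$. One minor aside: your parenthetical that the Koszul homology has ``Poincar\'e duality in the top degree'' and that ``the Koszul complex resolves it'' is not accurate in general (that would require $R/IJ$ to be Gorenstein, respectively a complete intersection), but this remark plays no role in the argument and can simply be deleted.
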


\begin{proof}
The statement about the Tor-algebras is clear by the Tor-algebra classification of \cite[Theorem 2.1]{avramov1988poincare} combined with Observation \ref{obs:trivialDeg1}.
\end{proof}

\begin{remark}
If there does exist a product of ideals defining a quotient of Tor-algebra class $G(r)$, then this quotient is necessarily non-Gorenstein by a result of Huneke \cite{huneke2007ideals}. This means that such a quotient would be another counterexample to a question of Avramov (see \cite{avramov2012cohomological}).
\end{remark}

Next, we recall some notation and results from \cite{vandebogert2020dg}.

\begin{notation}\label{not:trimmingNotation}
Let $I = (\phi_1 , \dots  , \phi_n) \subseteq R$ be an $\m$-primary ideal and $F_\bullet$ a DG-algebra free resolution of $R/I$. Given an indexing set $\sigma = \{1 \leq \sigma_1 < \cdots < \sigma_t \leq n\}$, define
$$\tm_\sigma (I) := (\phi_i \mid i \notin \sigma) + \m (\phi_j \mid j \in \sigma).$$
The transformation $I \mapsto \tm_\sigma (I)$ will be referred as \emph{trimming} the ideal $I$.
\end{notation}

Observe that if $\sigma = \{ 1 , \dots , n \}$ (in the notation of Notation \ref{not:trimmingNotation}), then $\tm_\sigma (I) = I \m$. Thus a free resolution of $R / I \m$ may be obtained as a trimming complex and the Tor algebra structure may be deduced from the product furnished by Theorem \ref{thm:DGlength3}.

\begin{setup}\label{set:trimmingSetup}
Let $(R , \m , k)$ denote a regular local ring of dimension $3$. Let $I = (\phi_1 , \dotsc , \phi_n) \subseteq R$ be an ideal and $\sigma = (1 \leq \sigma_1 < \cdots < \sigma_t \leq n)$ be an indexing set. Let $(F_\bullet, d_\bullet)$ and $(K_\bullet, m_\bullet)$ be minimal DG-algebra free resolutions and $R/I$ and $k$, respectively. By Theorem \ref{itres}, a free resolution of $R/ \tm_\sigma (I)$ may be obtained as the mapping cone of a morphism of complexes of the form:
\begin{equation}\label{eq:trimcx}
    \xymatrix{0  \ar[rr] && F_3 \ar[d]^-{Q_2} \ar[rr]^-{d_3} && F_2 \ar[d]^-{Q_1} \ar[rrr]^-{d_2'} &&& F_1' \ar[d]^-{d_1} \\
\bigoplus_{i=1}^t K_3 \ar[rr]^-{\bigoplus_{i=1}^t m_3} && \bigoplus_{i=1}^t K_2 \ar[rr]^-{\bigoplus_{i=1}^t m_2} && \bigoplus_{i=1}^t K_1 \ar[rrr]^-{-\sum_{i=1}^t m_1(-) d_1(e_0^{\sigma_i})} &&& R, \\}
\end{equation}
where
$$F_1' := \bigoplus_{j \notin \sigma} Re_0^j \quad \textrm{and} \quad d_2' : F_2 \xrightarrow{d_2} F_1 \xrightarrow{\textrm{proj}} F_1'.$$
Let $T_\bullet$ denote the mapping cone of \ref{eq:trimcx}. 
\end{setup}

In the following, note that $\overline{\cdot} := \cdot \otimes k$, where $k$ denotes the residue field.

\begin{prop}[{\cite[Lemma 4.9]{vandebogert2020dg}}]\label{prop:boundedProds}
Adopt notation and hypotheses as in Setup \ref{set:trimmingSetup} and assume that $I \subseteq \m^2$. Then,
$$\overline{F_1} \cdot_F \overline{F_1} \subseteq \ker (Q_1 \otimes k)$$
$$\overline{F_1} \cdot_F \overline{F_2} \subseteq \ker (Q_2 \otimes k)$$
In particular, the only possible nontrivial products in the algebra $\overline{T_\bullet}$ are given by
$$\overline{F_1'} \cdot_T \overline{F_1'} \quad \textrm{and} \quad \overline{F_1'} \cdot_T \overline{F_2}.$$
\end{prop}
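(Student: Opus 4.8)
The plan is to prove the two displayed inclusions directly and then read off the ``In particular'' clause from Corollary~\ref{cor:nontrivialMults}. Since $\dim R = 3$, the complex $K_\bullet$ is the Koszul complex on a minimal generating set $x_1,x_2,x_3$ of $\m$, and the argument rests on two elementary ``minimality'' properties of $K_\bullet$: (i) if $u \in K_1$ has $m_1(u) \in \m^2$, then $u \in \m K_1$ (choose a preimage of $m_1(u)$ under $m_1$ lying in $\m K_1$, and use $\ker m_1 \subseteq \m K_1$); and (ii) $\m^2 K_1 \cap \ker m_1 = \m\ker m_1$ (write such an element as $m_2(z)$, reduce the defining equations modulo $\m^2$, and use that $\overline{x_1},\overline{x_2},\overline{x_3}$ form a $k$-basis of $\m/\m^2$ to force $z \in \m K_2$). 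We also use repeatedly that $\ker m_1 = m_2(K_2)$ and $\ker m_2 = m_3(K_3) \subseteq \m K_2$, since $K_\bullet$ resolves $k$. Note that both $Q_1 \otimes k$ and $Q_2 \otimes k$ are independent of the choices of lifts in Theorem~\ref{itres}, because the ambiguity in $q_1^i$ (resp.\ $q_2^i$) is a map into $\ker m_1 \subseteq \m K_1$ (resp.\ $\ker m_2 \subseteq \m K_2$); so we are free to fix convenient lifts.

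For the first inclusion, take $q_1^i$ with $m_1 \circ q_1^i$ equal, up to sign, to the $e_0^{\sigma_i}$-coefficient of $d_2$. For $x_1,y_1 \in F_1$ the Leibniz rule gives $d_2(x_1 y_1) = d_1(x_1)y_1 - d_1(y_1)x_1$, so the $e_0^{\sigma_i}$-coefficient of $d_2(x_1y_1)$ lies in $d_1(x_1)R + d_1(y_1)R \subseteq I \subseteq \m^2$ — this is the only place the hypothesis $I \subseteq \m^2$ enters. Hence $m_1(q_1^i(x_1y_1)) \in \m^2$, and property (i) gives $q_1^i(x_1y_1) \in \m K_1$ for every $i$, i.e.\ $\overline{F_1}\cdot_F\overline{F_1} \subseteq \ker(Q_1 \otimes k)$.

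The second inclusion is the crux. Fixing $q_2^i$ with $m_2 \circ q_2^i = q_1^i \circ d_3$, for $x_1 \in F_1$, $f_2 \in F_2$ we obtain, by $R$-linearity and Leibniz,
$$m_2\big(q_2^i(x_1 f_2)\big) = q_1^i\big(d_3(x_1 f_2)\big) = d_1(x_1)\,q_1^i(f_2) - q_1^i\big(x_1 d_2(f_2)\big).$$
The first term lies in $\m^2 K_1$ since $d_1(x_1) \in I \subseteq \m^2$. For the second, expand $d_2(f_2) = \sum_l r_l e_l$ over an $R$-basis of $F_1$ with all $r_l \in \m$ (minimality of $F_\bullet$); then $q_1^i(x_1 d_2(f_2)) = \sum_l r_l\, q_1^i(x_1 e_l)$, and each $q_1^i(x_1 e_l) \in \m K_1$ by the \emph{first} inclusion applied to the product $x_1 e_l$, so this term too lies in $\m^2 K_1$. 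Therefore $m_2(q_2^i(x_1 f_2)) \in \m^2 K_1$; it also lies in $\ker m_1$, since $m_1 q_1^i d_3$ is $\pm$ the $e_0^{\sigma_i}$-coefficient of $d_2 d_3 = 0$. By property (ii) it then lies in $\m\ker m_1 = m_2(\m K_2)$, so $q_2^i(x_1 f_2)$ agrees modulo $\ker m_2 = m_3(K_3) \subseteq \m K_2$ with an element of $\m K_2$, whence $q_2^i(x_1 f_2) \in \m K_2$. This proves $\overline{F_1}\cdot_F\overline{F_2} \subseteq \ker(Q_2 \otimes k)$.

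The main obstacle is precisely this last inclusion: the naive estimate only gives $m_2(q_2^i(x_1 f_2)) \in \m K_1$, which is automatic and insufficient, and the key point is that feeding back the already-established first inclusion upgrades this to $\m^2 K_1$, after which property (ii) — which genuinely uses that $R$ is regular and $K_\bullet$ is minimal — closes the argument. Finally, the ``In particular'' clause follows by combining Corollary~\ref{cor:nontrivialMults} — which, taking $G^i_\bullet = K_\bullet$ there, already restricts the possibly nontrivial products of $\overline{T_\bullet}$ to $\overline{F_1'}\cdot_T\overline{F_1'}$, $\overline{F_1'}\cdot_T\overline{F_2}$, and the three products pairing an $F$-factor with a copy of $K_\bullet$ — with the observation that the closed-form expressions for those three products in \cite[Proposition~3.4]{vandebogert2020dg} are assembled from $Q_1 \otimes k$ and $Q_2 \otimes k$ evaluated on products lying in $\overline{F_1}\cdot_F\overline{F_1}$ and $\overline{F_1}\cdot_F\overline{F_2}$, hence vanish by what was just shown.
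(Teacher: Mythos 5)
The paper does not prove this proposition --- it is imported verbatim as \cite[Lemma 4.9]{vandebogert2020dg} --- so there is no in-paper argument to compare against; I can only assess your proof on its own terms. Your argument for the two displayed inclusions is correct and essentially self-contained. The normalization $m_1\circ q_1^i = \pm(\text{the } e_0^{\sigma_i}\text{-coefficient of } d_2)$ and $m_2\circ q_2^i = q_1^i\circ d_3$ is exactly how the lifts in Theorem \ref{itres} are built, and your observation that the residual ambiguity lands in $\ker m_1\subseteq \m K_1$ (resp.\ $\ker m_2\subseteq\m K_2$) legitimately lets you fix these choices. Properties (i) and (ii) both check out for the Koszul complex on a regular system of parameters: (i) is the usual lifting argument through $\m^2=m_1(\m K_1)$, and (ii) follows because the syzygies of $x_1,x_2,x_3$ are Koszul and $\overline{x}_1,\overline{x}_2,\overline{x}_3$ are independent in $\m/\m^2$. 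The genuinely nontrivial step --- feeding the first inclusion back into the estimate for $q_1^i(x_1 d_2(f_2))$ to upgrade $m_2(q_2^i(x_1f_2))$ from $\m K_1$ to $\m^2K_1\cap\ker m_1$ --- is exactly what is needed, and your signs in the Leibniz expansions are right.

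The one soft spot is the ``in particular'' clause. Corollary \ref{cor:nontrivialMults} leaves five candidate products, and disposing of the three that involve a $K_\bullet$-factor requires the explicit formulas of \cite[Proposition 3.4]{vandebogert2020dg}, which you invoke but cannot exhibit from the present paper. Your claim that those formulas are assembled from $Q_1\otimes k$ and $Q_2\otimes k$ evaluated on $\overline{F_1}\cdot_F\overline{F_1}$ and $\overline{F_1}\cdot_F\overline{F_2}$ is consistent with how the author uses the same formulas later (e.g.\ the closed form displayed in the proof of Proposition \ref{prop:golodProdforleq3}), but as written it is an assertion rather than a verification. Since the statement itself is a citation, this is a forgivable gap, but if you want a complete proof you should write out those three products explicitly and check that each one factors through the two inclusions you established.
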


Combining the above results, we are able to prove the following result. Notice that the assumption that $R$ is a regular local ring here is essential, as the result \cite[Theorem 4.2]{christensen2018golod} shows that even over a complete intersection ring, powers of the maximal ideal may not define Golod rings.

\begin{theorem}\label{thm:ImGolod}
Let $(R, \m ,k)$ be a $3$-dimensional regular local ring and $I$ a proper ideal of $R$. Then $I \m$ defines a Golod ring.
\end{theorem}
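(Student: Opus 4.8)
The plan is to reduce Golodness of $R/I\m$ to triviality of the Tor-algebra, and then to kill the remaining products using the iterated trimming complex machinery. First I would split into cases according to the projective dimension of $R/I\m$. Since $R$ is regular of dimension $3$, the Auslander--Buchsbaum formula gives $\pd_R(R/I\m) \leq 3$. If $\pd_R(R/I\m) \leq 2$, then $R/I\m$ is either a complete intersection or Golod by the classical codepth $\leq 2$ dichotomy, and in the complete intersection case one checks directly (or notes it cannot happen for a nontrivial product $I\m$ with $I$ proper and $\m$ not a complete intersection factor) — in any event Golodness holds. So the substance is the case $\pd_R(R/I\m) = 3$.

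In that case, by Corollary \ref{cor:torAlgProd} (or directly by \cite[Proposition 5.2.4]{avramov1998infinite}) it suffices to show the Tor-algebra of $R/I\m$ has trivial multiplication. The key point is that $I\m = \tm_\sigma(I)$ for $\sigma = \{1,\dots,n\}$ (as noted after Setup \ref{set:trimmingSetup}), so a free resolution of $R/I\m$ is the mapping cone $T_\bullet$ of the trimming morphism \ref{eq:trimcx}, and by Theorem \ref{thm:DGlength3} this $T_\bullet$ carries a DG-algebra structure. Now I would invoke Corollary \ref{cor:nontrivialMults}: the only possibly nontrivial products in $\overline{T_\bullet}$ lie among $\overline{F_1'}\cdot\overline{F_1'}$, $\overline{F_1'}\cdot\overline{F_2}$, $\overline{F_1'}\cdot\overline{G_1^i}$, $\overline{G_1^i}\cdot\overline{F_2}$, and $\overline{F_1'}\cdot\overline{G_2^i}$. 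But here each $G_\bullet^i$ is the Koszul complex $K_\bullet$ on a regular system of parameters (since $\mfa_i = \m$), and crucially $F_1' = \bigoplus_{j\notin\sigma} Re_0^j = 0$ because $\sigma$ is the full index set. That immediately annihilates every product involving an $\overline{F_1'}$ factor. The only survivor is $\overline{G_1^i}\cdot_T\overline{F_2}$.

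So the heart of the argument — and the step I expect to be the main obstacle — is showing $\overline{G_1^i}\cdot_T\overline{F_2} = 0$ in $\overline{T_\bullet}$, i.e. that the lift to Koszul homology of a degree-$1$ generator coming from a $G^i = K$ summand multiplies trivially against the classes coming from $F_2$. Here I would use Observation \ref{obs:KoszulLifts}: the class of $\overline{g_1^i}$ maps, under $H_\bullet(T_\bullet\otimes k)\cong H_\bullet(R/I\m\otimes K_\bullet)$, to $[\phi^i(g_1^i)\,d_1(e_0^i)]$, where $\phi^i$ is the comparison map for $R/\m$ — but $G^i_\bullet = K_\bullet$ resolves $R/\m = k$, so this comparison is (up to homotopy) the identity and $\overline{g_1^i}$ simply corresponds to a Koszul $1$-cycle of the form $d_1(e_0^i)\cdot e_\lambda$, i.e. to a class in $H_1$ of the Koszul complex that is a product $z_1^{\m}\cdot(\text{unit direction})$ — in other words its representative already lies in the image of the boundary. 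Concretely, such a class is a product $d(z^{I})\cdot z^{\m}$ of the shape analyzed in Observation \ref{obs:trivialDeg1}, and its product with any class from $F_2$ descends to a boundary. I would either spell this out via the explicit product formula of \cite[Proposition 3.4]{vandebogert2020dg}, which expresses $\overline{G_1^i}\cdot_T\overline{F_2}$ in terms of the (minimal, hence trivially-multiplying-past-$\m$) data, or argue homologically: after tensoring with $k$, the relevant component of the product formula factors through $Q_2\otimes k$ applied to $\overline{F_2}$, and since $\mfa_i = \m$ one has $Q_2(F_3)\subseteq \m\cdot(\bigoplus K_2)$ while the product lands in a spot forcing it into $\m\cdot T_\bullet$; minimality then gives vanishing in $\overline{T_\bullet}$. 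Either way, once this last family of products is shown to vanish, the Tor-algebra of $R/I\m$ is trivial, hence $R/I\m$ is Golod, completing the proof; the bookkeeping of the explicit product formula from \cite{vandebogert2020dg} in the $\mfa_i = \m$ case is the only genuinely technical part.
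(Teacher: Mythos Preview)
Your overall architecture matches the paper's: view $I\m$ as $\tm_\sigma(I)$ with $\sigma$ the full index set, resolve via the trimming complex $T_\bullet$, observe $F_1'=0$, and then argue that the Tor-algebra is trivial. The point where your argument becomes genuinely incomplete is exactly the step you flag as ``the main obstacle'': the vanishing of $\overline{G_1^i}\cdot_T\overline{F_2}$. Neither of your two sketches actually closes this. Observation~\ref{obs:trivialDeg1} only kills $H_1\cdot H_1$, whereas here you need an $H_1\cdot H_2$ product; writing the $G_1^i$-class as $[d_1(e_0^i)\,e_\lambda]$ does not by itself exhibit its product with an $F_2$-class as a boundary. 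And your second sketch misstates the maps: $Q_2$ has domain $F_3$, not $F_2$, so ``factors through $Q_2\otimes k$ applied to $\overline{F_2}$'' does not parse.

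The paper handles this step differently, and the difference is not cosmetic. It first assumes $I\subseteq\m^2$ and invokes Proposition~\ref{prop:boundedProds} (i.e.\ \cite[Lemma~4.9]{vandebogert2020dg}), which uses the explicit product formula together with the containment $I\subseteq\m^2$ to show that the only possibly nonzero products in $\overline{T_\bullet}$ are $\overline{F_1'}\cdot_T\overline{F_1'}$ and $\overline{F_1'}\cdot_T\overline{F_2}$; in particular $\overline{G_1^i}\cdot_T\overline{F_2}$ is already killed there. With $F_1'=0$ the proof is then immediate. The case $I\not\subseteq\m^2$ is treated separately: such an $I$ is either a complete intersection, in which case the paper defers to Proposition~\ref{prop:golodProdforleq3} (whose proof does real case work precisely to control the analogous product), or a non--complete-intersection hyperplane section, handled by \cite[Lemma~4.13]{vandebogert2020dg}. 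So the hypothesis $I\subseteq\m^2$ is genuinely what makes the product-formula argument go through cleanly; your attempt to bypass it for all $I$ at once is where the gap lies.
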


\begin{proof}
Assume first that $I \subseteq \m^2$. By Proposition \ref{prop:boundedProds}, the only possible nontrivial products come from products of the form $\overline{F_1'} \cdot_T \overline{F_1'}$ or $\overline{F_1'} \cdot_T \overline{F_2}$. However, $F_1' = 0$, so all products are trivial.

If $I \subseteq \m$ but $I \not\subset \m^2$, then $I$ is either a complete intersection or a hyperplane section that is \emph{not} a complete intersection; the latter case follows from Lemma $4.13$ of \cite{vandebogert2020dg} and the case that $I$ is a complete intersection will be handled by Proposition \ref{prop:golodProdforleq3}.
\end{proof}

\section{On the Ubiquity of Non-Golod Products of Ideals}\label{sec:nonGolodProds}

In this section, we prove more results on Golodness and non-Golodness of certain types of quotients. We show that Golodness of a product of ideals can be detected by showing that an associated morphism of complexes is a split injection; this allows us to prove that certain products of complete intersections define Golod rings. We then consider a method of constructing non-Golod products of ideal en masse. To do this, we show how to construct a free resolution for particular products of ideals. Then, we show that under sufficient assumptions this complex is guaranteed to have nontrivial multiplication in the Tor-algebra; this proves that such ideals cannot be Golod. We then prove that if the ideals as above have sufficiently small grade, then the product \emph{is} Golod. 

The first lemma of this section is a general result on constructing trivial Massey operations. It will be used to give a quick proof of Proposition \ref{prop:prodKoszul}. Throughout this section, $(R , \m)$ will denote a regular local ring (or a standard graded polynomial ring over a field). In the following statement, let $K_\bullet$ denote the Koszul complex on the minimal generators of $\m$.

\begin{lemma}\label{lem:golodnessLem}
Let $I \subset R$ be any ideal and enumerate a $k$-basis $\cat{B} = \{ [z_a ] \}_{a \in A}$ for the Koszul homology algebra $H_{\geq 1} (R/I)$. Assume that there exists a map $\nu : \{ z_a \mid a \in A \} \to R/I \otimes K_\bullet$ such that
$$(*) \qquad z_a \w z_b = z_a \w d( \nu (z_b)) \quad \textrm{for all} \ a,  \ b \in A.$$
Then $R/I$ is a Golod ring.
\end{lemma}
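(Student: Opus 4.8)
The plan is to construct an explicit trivial Massey operation $\mu$ on $K^{R/I}_\bullet := R/I \otimes K_\bullet$ out of the given data $\{z_a\}_{a\in A}$ and $\nu$. The idea is that condition $(*)$ is exactly the $p=2$ instance of the defining relation for a trivial Massey operation once we set $\mu(z_a) := z_a$ and $\mu(z_a, z_b) := -\nu(z_b)\wedge z_a$ (up to a sign I would pin down carefully using the $\overline{(\cdot)}$ convention $\overline{a} = (-1)^{|a|+1}a$); then $d\mu(z_a,z_b) = -d(\nu(z_b))\wedge z_a \pm \nu(z_b)\wedge d(z_a)$, and since $z_a$ is a cycle and $(*)$ rewrites $z_a\wedge d(\nu(z_b))$ as $z_a \wedge z_b$, we recover $\overline{\mu(z_a)}\mu(z_b) = \overline{z_a}\,z_b$ up to sign. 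So the base case is essentially handed to us; the real work is the inductive construction of the higher $\mu(z_{a_1},\dots,z_{a_p})$ for $p\geq 3$.

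For the inductive step I would proceed exactly as in the classical Golod construction (Golod, or Gulliksen--Levin, or \cite[Ch. 5]{avramov1998infinite}): assuming $\mu$ has been defined on all tuples of length $<p$ satisfying the Massey relation, set $w := \sum_{j=1}^{p-1}\overline{\mu(z_{a_1},\dots,z_{a_j})}\,\mu(z_{a_{j+1}},\dots,z_{a_p})$ and check that $w$ is a cycle; then one needs it to be a \emph{boundary} so that we may choose $\mu(z_{a_1},\dots,z_{a_p})$ with $d$ of it equal to $w$. That $dw = 0$ is the standard bookkeeping computation with the Leibniz rule and the inductive hypothesis — the cross terms telescope/cancel in pairs. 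The point where the hypothesis $(*)$ must be used essentially is to guarantee $w$ is a boundary: one shows (again by induction on $p$, feeding the relation $z_a\wedge z_b = z_a \wedge d\nu(z_b)$ through the bracket) that $w = d(\text{something involving }\nu)$, i.e. every Massey product lies in $\m\cdot K^{R/I}$ in a way that is visibly exact, OR — the slicker route — that $w$ actually lies in $(I \text{-torsion-free part})$ and, because each $\mu$-value of length $\geq 2$ can be taken inside the submodule generated by the $\nu(z_b)$'s wedged with lower $\mu$'s, the relation $(*)$ forces the needed exactness.

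Concretely, the cleanest packaging: define $\mu$ on length-$\geq 2$ tuples recursively by $\mu(z_{a_1},\dots,z_{a_p}) := -\,\nu(z_{a_p}) \wedge \mu(z_{a_1},\dots,z_{a_{p-1}})$ (with an appropriate sign), and then verify by a single induction that this satisfies the Massey relation, the only external input being $(*)$ and the fact that the $z_a$ are cycles. Under this Ansatz the verification of $d\mu(z_{a_1},\dots,z_{a_p}) = \sum_j \overline{\mu(z_{a_1},\dots,z_{a_j})}\mu(z_{a_{j+1}},\dots,z_{a_p})$ reduces, via Leibniz and the inductive hypothesis for length $p-1$, to exactly an application of $(*)$ in the last slot plus sign bookkeeping. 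I would spell out the $p=2$ and $p=3$ cases explicitly to fix all signs, then assert the general induction.

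The main obstacle I anticipate is \textbf{the sign management} and making sure the recursive Ansatz for $\mu$ on long tuples is genuinely consistent — i.e. that the telescoping in $dw$ really does collapse to a single term handled by $(*)$, rather than leaving residual terms that would need $\nu$ to satisfy relations stronger than $(*)$. If the naive Ansatz $\mu(z_{a_1},\dots,z_{a_p}) = \pm\nu(z_{a_p})\wedge\mu(z_{a_1},\dots,z_{a_{p-1}})$ turns out not to close up, the fallback is the non-constructive route: show $H_{\geq 1}(R/I)^2 = 0$ (immediate from $(*)$ after descending to homology, since $d\nu(z_b)$ is a boundary) and combine with the strong form that makes trivial-product plus this factorization suffice — but that would need a separately stated theorem, so I prefer to push the explicit construction through. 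Either way, once the Massey operation is exhibited, $R/I$ is Golod by Definition \ref{def:Golod}, and the proof is complete.
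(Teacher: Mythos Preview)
Your explicit Ansatz is exactly the paper's approach: the paper sets $\mu([z_{a_1}],\dots,[z_{a_p}]) := z_{a_1}\wedge\nu(z_{a_2})\wedge\cdots\wedge\nu(z_{a_p})$ (the closed form of your recursion, up to the sign reshuffling you anticipated) and verifies the Massey relation by a single telescoping computation in which $(*)$ is used to replace each leading factor $z_{a_{i+1}}$ in $\mu([z_{a_{i+1}}],\dots,[z_{a_p}])$ by $d(\nu(z_{a_{i+1}}))$, after which the sum collapses to $d\mu$ via Leibniz. Your sign worries are unnecessary---the computation closes up cleanly without any inductive bookkeeping.
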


\begin{proof}
Let $\mu : \coprod_{i=1}^\infty \cat{B}^i \to R/I \otimes K_\bullet$ be defined by
$$\mu ([z_{a_1}] , \dots , [z_{a_p}] ) := z_{a_1} \w \nu (z_{a_2}) \w \cdots \w \nu (z_{a_p}).$$
Then the proof follows by showing that $\mu$ as above is a trivial Massey operation on $R/I \otimes K_\bullet$. Inductively, one computes:
\begingroup\allowdisplaybreaks
\begin{align*}
    &\sum_{i=1}^{p-1} \overline{\mu ([z_{a_1}] , \dots , [z_{a_i}])} \mu ([z_{a_{i+1}}] , \dots , [z_{a_p}]) \\
    =& \sum_{i=1}^{p-1} \overline{z_{a_1} \w \nu (z_{a_2}) \w \cdots \w \nu (z_{a_i})} \w z_{a_{i+1}} \w \cdots \w \nu (z_{a_p}) \\
    =& \sum_{i=1}^{p-1} \overline{z_{a_1} \w \nu (z_{a_2}) \w \cdots \w \nu (z_{a_i})} \w d ( \nu (z_{a_{i+1}})) \w \cdots \w \nu (z_{a_p}) \\
    =& (-1)^{|z_{a_1}|} z_{a_1} \w d \big( \nu (z_{a_2}) \w \cdots \w \nu (z_{a_p}) \big) \\
    =& d \big( z_{a_1} \w \nu (z_{a_2}) \w \cdots \w \nu (z_{a_p}) \big) \\
    =& d \mu ( [z_{a_1}] , \dots , [z_{a_p}]).
\end{align*}
\endgroup
\end{proof}

\begin{notation}
Let $I$ and $J$ be proper ideals of $R$. Elements of the Koszul homology $H_i (R/I)$ and $H_j (R/J)$ will be denoted $z_i^I$ and $z_j^J$, respectively. The homology class of a cycle will in $H_\bullet (M)$ will be denoted by $[ \cdot]_{M \otimes K}$ or just $[ \cdot]$, when no confusion may occur.
\end{notation}

\begin{prop}\label{prop:prodKoszul}
Let $I$ and $J$ be proper ideals of $R$. If the Koszul homology $H_\bullet (R / IJ)$ is generated by the classes of elements of the form $d(z_i^I) \w z_j^J$, then $IJ$ defines a Golod ring.
\end{prop}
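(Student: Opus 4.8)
The plan is to invoke Lemma~\ref{lem:golodnessLem} with $K_\bullet$ the Koszul complex on a minimal generating set of $\m$ and with the ideal there taken to be $IJ$. By hypothesis a $k$-basis $\cat{B}$ of $H_{\geq 1}(R/IJ)$ is represented by cycles of the form $d(z_i^I)\w z_j^J$ (where $z_i^I$, $z_j^J$ are Koszul cycles representing basis elements of $H_\bullet(R/I)$ and $H_\bullet(R/J)$ respectively), so it suffices to produce a map $\nu$ on these representatives satisfying condition $(*)$, namely $z\w z' = z\w d(\nu(z'))$ for all basis representatives $z, z'$. First I would observe that since $z_j^J$ is a Koszul cycle in $R\otimes K_\bullet$ whose class lies in $J$-torsion-type homology, the element $z_i^I \w z_j^J$ is itself, up to sign, the boundary of something built from $z_j^J$ and a \emph{bounding chain} for $d(z_i^I)$ in the Koszul complex: more precisely, working in $R\otimes K_\bullet$ (not yet modded out by $IJ$), choose $w_i^I$ with $d(w_i^I) = d(z_i^I)$ — e.g. $w_i^I = z_i^I$ works trivially, but the point is to choose the bounding chain so that its entries lie in $I$. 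Then set $\nu\big(d(z_i^I)\w z_j^J\big) := \pm\, w_i^I \w z_j^J$ (with the sign dictated by the Leibniz rule), computed in $R/IJ\otimes K_\bullet$.

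The key steps in order: (1) fix Koszul cycles $z_i^I$ representing a basis of $H_\bullet(R/I)$, with the property that $z_i^I$ has coefficients in $I$ — this is automatic since $z_i^I$ is a cycle in $R/I\otimes K_\bullet$ lifted to have entries that become zero mod $I$... more carefully, one arranges representatives of $H_\bullet(R/I)$ whose Koszul-chain entries lie in $I$, which is standard. (2) Note $d(z_i^I)$ then has entries in $I$ as well, and is a \emph{genuine} cycle in $R\otimes K_\bullet$ (it bounds $z_i^I$). (3) Define $\nu$ on the given generators of $H_\bullet(R/IJ)$ by $\nu(d(z_i^I)\w z_j^J) = (-1)^{\epsilon} z_i^I \w z_j^J$, an element of $R/IJ\otimes K_\bullet$; here one must check this is well-defined, i.e. independent of the choice of expression of a basis element in the given form, which follows because the indeterminacy lies in lower homology that... (this is the delicate bookkeeping point). (4) Verify $(*)$: compute $d(\nu(d(z_i^I)\w z_j^J)) = d(z_i^I\w z_j^J) = d(z_i^I)\w z_j^J \pm z_i^I \w d(z_j^J)$, and observe that $d(z_j^J)$ has entries in $J$ while $z_i^I$ has entries in $I$, so the second term $z_i^I\w d(z_j^J)$ has entries in $IJ$ and hence vanishes in $R/IJ\otimes K_\bullet$; thus $d(\nu(\cdot)) = d(z_i^I)\w z_j^J$ in $R/IJ\otimes K_\bullet$, which is exactly what is needed to run the wedge identity $(*)$ of Lemma~\ref{lem:golodnessLem} (wedging a further cycle $z$ on the left against $d(\nu(z')) $ reproduces $z\w z'$). (5) Apply Lemma~\ref{lem:golodnessLem} to conclude $R/IJ$ is Golod.

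The main obstacle I anticipate is step (3)/(1): making sure the representatives $z_i^I$ can be chosen with coefficients in $I$ and that the assignment $\nu$ is well-defined on an actual $k$-basis of $H_\bullet(R/IJ)$ — the hypothesis only says the homology is \emph{generated} by such classes, so among the generators $d(z_i^I)\w z_j^J$ one picks out a basis and must check that the Massey-operation identity does not depend on hidden linear relations among the generators. In practice this is handled because the crucial cancellation in step (4) ($z_i^I\w d(z_j^J)\in IJ\otimes K_\bullet$) uses only that the chosen $z_i^I$ has entries in $I$ and $z_j^J$ has entries in $J$, a property one can demand of each chosen representative simultaneously; the sign $\epsilon$ is $|z_i^I|$ or $|d(z_i^I)|+1$ depending on the Leibniz convention $\overline{a}=(-1)^{|a|+1}a$ fixed earlier, and should be tracked carefully but is routine. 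Everything else is a direct application of Lemma~\ref{lem:golodnessLem}.
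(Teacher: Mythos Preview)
Your overall strategy---define $\nu$ by replacing $d(z_i^I)$ with $z_i^I$ and invoke Lemma~\ref{lem:golodnessLem}---is exactly the paper's approach. However, your verification of condition~$(*)$ contains a genuine error. In step~(1) you claim one can choose representatives $z_i^I$ of $H_\bullet(R/I)$ with \emph{coefficients in $I$}, and you use this in step~(4) to kill the term $z_i^I\w d(z_j^J)$. But a chain with all entries in $I$ is zero in $R/I\otimes K_\bullet$, so it cannot represent a nonzero homology class; this is not ``standard,'' it is false. What is true is that $d(z_i^I)$ has entries in $I$ (since $z_i^I$ is a cycle mod $I$), and symmetrically $d(z_j^J)$ has entries in $J$---but $z_i^I$ itself need not.

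The fix is to stop trying to prove $d(\nu(z_b))=z_b$ outright and instead use the full strength of condition~$(*)$, which only demands equality \emph{after wedging with another basis representative} $z_a$. Writing $z_a=\sum_\ell c_\ell\, d(z_{p_\ell}^I)\w z_{q_\ell}^J$ and $z_b=\sum_m c_m'\, d(z_{i_m}^I)\w z_{j_m}^J$ (the paper allows such linear combinations as representatives), one has $d(\nu(z_b))=z_b\pm\sum_m c_m'\, z_{i_m}^I\w d(z_{j_m}^J)$, and the error term wedged against $z_a$ is a sum of products $d(z_{p_\ell}^I)\w z_{q_\ell}^J\w z_{i_m}^I\w d(z_{j_m}^J)$. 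Here $d(z_{p_\ell}^I)$ has entries in $I$ and $d(z_{j_m}^J)$ has entries in $J$, so each such product has entries in $IJ$ and vanishes in $R/IJ\otimes K_\bullet$. This is what the paper means by ``it is clear that condition~$(*)$ is satisfied.'' Your well-definedness worry in step~(3) is not an issue: $\nu$ is just a set map on a fixed choice of representatives, not a linear map on chains.
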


\begin{proof}
Let $\sum_{\ell=1}^m c_\ell [d(z_{a_\ell}^I) \w z_{b_\ell}^J]$ be a basis element for the Koszul homology, where $c_\ell \in k$ for each $\ell$; define:
$$\nu \Big( \sum_{\ell=1}^m c_\ell d(z_{a_\ell}^I) \w z_{b_\ell}^J \Big) := \sum_{\ell=1}^m c_\ell z_{a_\ell}^I \w z_{b_\ell}^J.$$
It is clear that the condition $(*)$ of Lemma \ref{lem:golodnessLem} is satisfied with respect to this basis, whence the result.
\end{proof}

In the statements that follow, we will often make the assumption that $\fitt_R (I) \subseteq J$ (where $\fitt_R (I)$ denotes the Fitting ideal of $I$; that is, the ideal generated by the entries of any minimal presentation matrix for $I$). This is done in order to satisfy the condition
$$d_0^i (F_2) \subseteq \mfa_i e_0^i$$
in the notation and hypotheses of Setup \ref{set:trimmingcxSetup}. If this is not satisfied, then the trimming procedure must be done iteratively, trimming each generator one at a time with a different ideal at each step. The following Proposition gives an interesting criterion for Golodness in terms of the vertical maps appearing in diagram \ref{itcomx} in the statement of Theorem \ref{itres}.

\begin{prop}\label{prop:golodSplit}
Let $I$ and $J$ be proper ideals of $R$ with $\fitt_R (I) \subseteq J$ and let $(F_\bullet, d_\bullet)$ and $(G_\bullet, m_\bullet)$ be minimal free resolutions of $R/I$ and $R/J$, respectively. Let $t = \mu (I)$ and assume that the vertical maps in the diagram 
$$\xymatrix{\cdots \ar[r]^{d_{k+1}} &  F_{k} \ar[dd]^{Q_{k-1}}\ar[r]^{d_{k}} & \cdots \ar[r]^{d_3} & F_2 \ar[dd]^{Q_1} &&&&  \\
&&&&&&& \\
\cdots \ar[r]^-{\bigoplus m_k} & \bigoplus_{i=1}^t G_{k-1} \ar[r]^-{\bigoplus m_{k-1}} & \cdots \ar[r]^-{\bigoplus m_2} & \bigoplus_{i=1}^t G_1 \ar[rrrr]^-{-\sum_{i=1}^t m_1(-)\cdot d_1(e^i_0)} &&&& R \\}$$
of Theorem \ref{itres} are split injections. Then $IJ$ defines a Golod ring. 
\end{prop}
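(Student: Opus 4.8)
The natural route is to use Proposition~\ref{prop:prodKoszul}: if I can show that $H_\bullet(R/IJ)$ is generated by classes of elements of the form $d(z_i^I)\wedge z_j^J$, then $IJ$ is Golod. The key point is that $IJ = \tm_\sigma$ applied to $I$ (in the sense that one trims \emph{every} generator of $I$ with the ideal $J$), so the mapping cone $T_\bullet$ of Theorem~\ref{itres} is a free resolution of $R/IJ$, with $F_1' = 0$ since $\sigma$ ranges over all $t = \mu(I)$ generators. Thus $T_\bullet$ is built entirely from the truncated pieces $\bigoplus_{i=1}^t G_\bullet$ together with $F_{\geq 2}$, glued along the split-injective maps $Q_\bullet$.

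\textbf{Main steps.} First I would record that, since $F_1' = 0$, Corollary~\ref{cor:nontrivialMults} already kills most products; the only products that could survive in $\overline{T_\bullet}$ are those involving $\overline{F_1'}$, all of which vanish. So the Tor-algebra of $R/IJ$ is automatically trivial in the multiplicative sense — but triviality of $H_{\geq 1}(A)^2$ alone does not give Golodness in projective dimension $>3$, which is why we need the stronger Massey-operation machinery of Lemma~\ref{lem:golodnessLem}/Proposition~\ref{prop:prodKoszul}. Second, I would use the hypothesis that each $Q_k$ is a split injection: write $F_k \cong \operatorname{im}(Q_{k-1}) \oplus C_k$ as a direct summand inside $\bigoplus_{i=1}^t G_{k-1}$. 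Because the maps split, the mapping cone $T_\bullet$ is, up to isomorphism of complexes, a \emph{direct sum} of $\bigoplus_{i=1}^t G_\bullet$ (with first differential rescaled by the $d_1(e_0^i)$) and a contractible complex coming from the $C_k$'s; more precisely, splitting lets me do a change of basis so that $T_\bullet \cong \big(\bigoplus_{i=1}^t G_\bullet^{[i]}\big) \oplus (\text{acyclic})$, where $G_\bullet^{[i]}$ is $G_\bullet$ with $m_1$ replaced by $m_1(-)\cdot d_1(e_0^i)$. Third, I would invoke Observation~\ref{obs:KoszulLifts}: under the comparison isomorphism $H_\bullet(T_\bullet \otimes k) \cong H_\bullet(R/IJ \otimes K_\bullet)$, the basis element $\overline{g_j^i}$ maps to $[\phi^i(g_j^i)\,d_1(e_0^i)]$. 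Now $d_1(e_0^i) = a_i$ is (a representative of) one of the minimal generators of $I$, hence is itself $d(z_1^I)$ for the obvious degree-one Koszul cycle $z_1^I = e_i \in K_1$ representing a generator of $H_1(R/I)$ — or more precisely, $a_i \in I$ so its image in $R/IJ \otimes K_0$ is a boundary coming from $H_1(R/J)$-type data; and $\phi^i(g_j^i)$ is a Koszul cycle representing a class in $H_j(R/\mfa_i) = H_j(R/J)$. So each generator of $H_\bullet(R/IJ)$ has exactly the form $d(z_1^I)\wedge z_j^J$ (up to the acyclic summand contributing nothing to homology). Applying Proposition~\ref{prop:prodKoszul} then finishes the proof.

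\textbf{Expected obstacle.} The delicate step is the second one: turning ``the $Q_k$ are split injections'' into a clean decomposition of $T_\bullet$ as a direct sum of a rescaled copy of $\bigoplus_i G_\bullet$ and an acyclic complex, in a way that is \emph{compatible with the comparison map to the Koszul complex}. One has to check that the splitting can be chosen as a map of complexes (not just degreewise), which requires the usual argument that a degreewise-split injection of complexes whose cokernel is a complex of projectives splits as complexes — and then verify that Observation~\ref{obs:KoszulLifts} still applies verbatim to the summand, since that observation was stated for the full $T_\bullet$. An alternative, possibly cleaner, route avoids the structural decomposition entirely: argue directly that every cycle in $T_\bullet \otimes k$ is homologous to one supported on the $\bigoplus_i G_\bullet$ summand (using split-injectivity of $Q_\bullet$ to push cycles off the $F_{\geq 2}$ part), then apply Observation~\ref{obs:KoszulLifts} and Proposition~\ref{prop:prodKoszul}. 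I would pursue this second route first, as it localizes the technical work to a single diagram chase rather than a global change of basis.
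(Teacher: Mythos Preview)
Your proposal is correct and follows the same route as the paper: use the split-injection hypothesis to see that the minimal resolution of $R/IJ$ is built from summands of $\bigoplus_i G_\bullet$, invoke Observation~\ref{obs:KoszulLifts} to identify the Koszul homology generators as elements of the form $d(z_1^I)\wedge z_j^J$, and conclude via Proposition~\ref{prop:prodKoszul}. The paper's argument is essentially your ``second route,'' phrased slightly more directly: it sets $C_i := \coker Q_i$, notes that the quotient complex $C_\bullet$ is the minimal free resolution of $R/IJ$, and observes that each $C_i$ is a direct summand of $\bigoplus_j G_i$ because $Q_i$ splits. One small correction to your first route: the decomposition is not $T_\bullet \cong \bigoplus_i G_\bullet^{[i]} \oplus (\text{acyclic})$ but rather $T_\bullet \cong C_\bullet \oplus (\text{acyclic})$, where $C_\bullet$ is a proper summand of $\bigoplus_i G_\bullet^{[i]}$; the paper's cokernel framing sidesteps this entirely.
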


\begin{proof}
Let $C_i := \coker Q_i$ where the $Q_i$ are as in the statement of the theorem. The minimal free resolution of $R/IJ$ is then obtained as the induced quotient complex
$$\cdots \to C_i \to C_{i-1} \to \cdots \to C_1 \to R.$$
Since each $Q_i$ is a split injection, each $C_i$ is a direct summand of $\bigoplus_{j=1}^t G_i$. Employing Observation \ref{obs:KoszulLifts}, it follows that the Koszul homology is generated by elements as in Proposition \ref{prop:prodKoszul}, whence $IJ$ defines a Golod ring.
\end{proof}

\begin{cor}
Let $J = (a_1 , \dots , a_n)$ be a regular sequence and $I = (a_1 , \dots , a_m)$ for some $m \leq n$. Then $IJ$ defines a Golod ring.
\end{cor}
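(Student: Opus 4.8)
The plan is to verify the hypotheses of Proposition \ref{prop:golodSplit}: we must show that when $J = (a_1, \dots, a_n)$ is a regular sequence and $I = (a_1, \dots, a_m)$ with $m \le n$, the vertical maps $Q_i$ appearing in diagram \ref{itcomx} are split injections. First I would set up the minimal free resolutions explicitly. Since $J$ is a regular sequence, $G_\bullet = K^J$ is the Koszul complex on $a_1, \dots, a_n$, and since $I = (a_1, \dots, a_m)$ is generated by an initial segment of a regular sequence, $F_\bullet = K^I$ is the Koszul complex on $a_1, \dots, a_m$; in particular $F_\bullet$ is the subalgebra of $G_\bullet$ generated by the first $m$ exterior variables. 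Note $\fitt_R(I) \subseteq (a_1, \dots, a_m) = I \subseteq J$ holds automatically for a Koszul presentation (the presentation matrix of $K^I$ has entries among the $a_i$ and $0$), so the iterated trimming setup with all $t = m$ generators trimmed simultaneously and each $\mfa_i = J$ applies, and the product $IJ$ is exactly $\tm_\sigma(I)$ with $\sigma = \{1, \dots, m\}$, i.e.\ $F_1' = 0$.

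Next I would identify the maps $Q_i \colon F_i \to \bigoplus_{i=1}^{m} G_{i-1}$ concretely. The lift of $d_2$ through the trimming construction comes from writing $d_2 = \sum_j d_0^j$ (there is no $d_2'$ part since $F_1' = 0$) and then lifting each $d_0^j$ to $G_\bullet$; since $F_\bullet \subseteq G_\bullet$ compatibly with the Koszul differentials, the comparison maps $Q_i$ can be taken to be the natural inclusions of wedge-products, composed with the appropriate "contraction by $e_0^j$" that records which generator each term is attached to. Concretely, a basis element $e_{j_1} \wedge \cdots \wedge e_{j_k}$ of $F_k$ (with $j_1 < \cdots < j_k \le m$) maps, in the $j_1$-th summand, to $\pm\, e_{j_2} \wedge \cdots \wedge e_{j_k} \in G_{k-1}$. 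Because the $e_j$ with $j \le m$ are among the generators of the Koszul complex $G_\bullet$ on $a_1, \dots, a_n$, these wedge monomials $e_{j_2} \wedge \cdots \wedge e_{j_k}$ are part of a free basis of $G_{k-1}$, and distinct basis elements of $F_k$ land on distinct basis elements of $\bigoplus_{i=1}^m G_{i-1}$ (the leading index $j_1$ records the summand, the rest records the monomial). Hence each $Q_i$ sends a free basis to part of a free basis, so it is a split injection.

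The main obstacle — and the one step that needs genuine care rather than bookkeeping — is checking that the comparison maps $Q_i$ furnished abstractly by Theorem \ref{itres} can indeed be chosen to be these inclusion-type maps, rather than some homotopic but non-split alternative. What is true is that \emph{any} choice of $Q_\bullet$ lifting the given $d_0^j$ differs from this one by a homotopy, and the conclusion "$C_i := \coker Q_i$ is a direct summand of $\bigoplus_j G_i$, minimal, and the Koszul classes have the form $d(z_i^I) \wedge z_j^J$" only needs \emph{some} valid choice to be split; so it suffices to exhibit one, which the explicit Koszul description above does. Once this is in place, Proposition \ref{prop:golodSplit} applies verbatim and yields that $IJ$ is Golod. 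I would close by remarking that this recovers, as the special case $m = n$, the statement that $J^2$ — wait, not $J^2$; rather it recovers that for a complete intersection $J$ the ideal $IJ$ with $I$ generated by part of the regular sequence is Golod, which is consistent with (and a clean instance of) Proposition \ref{prop:golodProdforleq3} and the general philosophy that products of complete intersections are well-behaved.
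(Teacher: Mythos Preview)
Your approach is correct and matches the paper's: both verify $\fitt_R(I)=I\subseteq J$ and then exhibit the vertical maps $Q_i$ as split injections arising from the inclusion of Koszul complexes $\bigwedge^\bullet K'\hookrightarrow\bigwedge^\bullet K$, after which Proposition~\ref{prop:golodSplit} applies. The paper's proof is terser---it simply invokes the natural inclusion without your explicit contraction description---but the content is the same; your one imprecision is the phrase ``sends a free basis to part of a free basis,'' since the full $Q_i$ has contributions in several summands, though your leading-term observation in the $j_1$-th summand is exactly what gives the splitting.
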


\begin{proof}
Recall first that $\fitt_R (I) = I$ if $I$ is a complete intersection, so the assumption $\fitt_R (I) \subset J$ of Proposition \ref{prop:golodSplit} is satisfied. Let $K$ be a free $R$-module of rank $n$ with $\psi : K \to R$ a map of $R$-modules with $\psi (K) = J$. Then there exists a direct summand $K' \subset K$ such that $\psi (K') = I$, in which case the vertical maps of Proposition \ref{prop:golodSplit} may be chosen as the natural inclusion $\bigwedge^i K' \hookrightarrow \bigwedge^i K$, which is evidently a split injection. 
\end{proof}

In the following setup, recall that it is always possible to put a (possibly nonassociative) DG-algebra structure on an acyclic complex $F_\bullet$ with $F_0 = R$; this is proved in, for instance, \cite[Proposition 1.1]{buchsbaum1977algebra}. For our purposes, associativity will not be relevant since the algebra structure will only be used to construct a morphism of complexes.

\begin{setup}\label{set:NonGOlodSet}
Let $R$ be a regular local ring with $I \subset R$ any proper ideal and let $\mfa \subseteq  I $ be a complete intersection. Let $K_\bullet$ denote the Koszul complex resolving $R/\mfa$ and $F_\bullet$ the minimal free resolution of $R/I$. Let $\cdot_F$ denote any (not necessarily associative) DG-algebra product on $R/I$ and let $L_1 : K_1 \to F_1$ denote the comparison map in homological degree $1$ extending the identity map. 

For $i>1$, let $L_i : K_i \to F_i$ be the map sending $f_{\sigma} \mapsto L_1(f_{\sigma_1}) \cdot_F \cdots \cdot_F L_1 (f_{\sigma_i})$. Notice that since $\cdot_F$ is not necessarily associative, the above notation is understood to mean multiplication starting with the left-most terms first, then moving to the right\footnote{A more precise notation would say $\Big( \cdot \big(L_1(f_{\sigma_1}) \cdot_F L_1 (f_{\sigma_2}) \big) \cdot_F \cdots \cdot_F L_1(f_{\sigma_{i-1}}) \Big) \cdot_F L_1 (f_{\sigma_i})$}. With this, define $\Phi_i : K_i \to F_{i-1} \otimes K_1$ for $i >1$ to be the map sending
$$f_{\sigma} \mapsto \sum_{r \in \sigma} \sgn(r \in \sigma) L_{i-1} (f_{\sigma \backslash r}) \otimes f_r,$$
where $\sgn$ denotes the sign of the permutation that reorders sets into ascending order.
\end{setup}

\begin{prop}
Adopt notation and hypotheses as in Setup \ref{set:NonGOlodSet}. For all $i >2$, 
$$\Phi_{i-1} \circ d^K_i = d^F_{i-1} \circ \Phi_i$$
\end{prop}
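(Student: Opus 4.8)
The statement is a purely formal compatibility assertion: the maps $\Phi_i$ assemble into a morphism of complexes from the tail of the Koszul complex $K_\bullet$ (in degrees $\geq 2$) to the tail of $F_{\bullet-1}\otimes K_1$, once one checks the single square condition $\Phi_{i-1}\circ d^K_i = d^F_{i-1}\circ\Phi_i$ for $i>2$. The plan is to verify this by a direct computation on a basis element $f_\sigma = f_{\sigma_1}\wedge\cdots\wedge f_{\sigma_i}$ of $K_i$, expanding both sides using (i) the Leibniz rule for the Koszul differential $d^K$, (ii) the definition of $\Phi$ as ``pull off one wedge factor'', (iii) the DG-Leibniz rule for $d^F$ applied to the iterated (left-associated) products $L_{i-1}(f_\tau) = \big(\cdots(L_1(f_{\tau_1})\cdot_F L_1(f_{\tau_2}))\cdots\big)\cdot_F L_1(f_{\tau_{i-1}})$, and (iv) the fact that $L_1$ is a chain map, so $d^F_1 L_1(f_r) = L_1 d^K_1(f_r) = d^K_1(f_r)\in R$ acts as a scalar.

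\textbf{Key steps in order.} First I would fix $f_\sigma\in K_i$ with $\sigma=\{\sigma_1<\cdots<\sigma_i\}$ and write out $\Phi_i(f_\sigma) = \sum_{r\in\sigma}\sgn(r\in\sigma)\,L_{i-1}(f_{\sigma\setminus r})\otimes f_r$. Applying $d^F_{i-1}\otimes K_1$ to this (the differential on $F_{\bullet-1}\otimes K_1$ acts only on the $F$ factor, since $K_1$ is in a fixed degree, up to the usual sign), I get $\sum_{r}\sgn(r\in\sigma)\, d^F_{i-1}L_{i-1}(f_{\sigma\setminus r})\otimes f_r$, so I need to understand $d^F_{i-1}L_{i-1}(f_\tau)$ for $\tau$ of size $i-1$. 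Second, I would establish by induction on the length of $\tau$ the key identity expressing $d^F L_{|\tau|}(f_\tau)$ in terms of lower products: using the DG-Leibniz rule on the outermost product $L_{|\tau|}(f_\tau) = L_{|\tau|-1}(f_{\tau'})\cdot_F L_1(f_{\tau_{\last}})$ gives $d^F(L_{|\tau|-1}(f_{\tau'})\cdot_F L_1(f_{\tau_{\last}})) = d^F L_{|\tau|-1}(f_{\tau'})\cdot_F L_1(f_{\tau_{\last}}) \pm L_{|\tau|-1}(f_{\tau'})\cdot_F d^F L_1(f_{\tau_{\last}})$, and the second term collapses to a scalar times $L_{|\tau|-1}(f_{\tau'})$. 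Iterating, $d^F L_{i-1}(f_\tau)$ becomes a signed sum over $s\in\tau$ of $d^K_1(f_s)\,L_{i-2}(f_{\tau\setminus s})$ type terms — i.e. exactly the shape one expects from a Koszul-type differential, up to the reordering signs. Third, I would expand the left-hand side $\Phi_{i-1}(d^K_i f_\sigma) = \Phi_{i-1}\big(\sum_{s\in\sigma}\sgn(s\in\sigma)\,d^K_1(f_s)\,f_{\sigma\setminus s}\big) = \sum_{s\in\sigma}\sum_{r\in\sigma\setminus s}\sgn(s\in\sigma)\sgn(r\in\sigma\setminus s)\,d^K_1(f_s)\,L_{i-2}(f_{\sigma\setminus\{s,r\}})\otimes f_r$. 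Finally, I would match the two double sums term-by-term: both are indexed by ordered pairs $(r,s)$ of distinct elements of $\sigma$ (with $f_r$ the ``output'' wedge factor and $d^K_1(f_s)$ the scalar), and the task reduces to checking that the accumulated signs agree — the product of the two $\sgn$ factors on the left against the $\sgn(r\in\sigma)$ from $\Phi_i$ times the internal sign picked up in peeling $s$ out of $\sigma\setminus r$ inside $d^F L_{i-1}$, plus the $(-1)$-signs from the DG-Leibniz rule.

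\textbf{Main obstacle.} The only real difficulty is bookkeeping the signs consistently: the reordering signs $\sgn(r\in\sigma)$, the homological signs in the DG-Leibniz rule $(-1)^{|x|}$ (here all $L_1(f)$ have degree $1$, so these are manageable but must be tracked through the iterated product), and the sign in the differential of the tensor complex $F_{\bullet-1}\otimes K_1$. I would handle this by proving the auxiliary formula for $d^F L_{i-1}(f_\tau)$ cleanly by induction first, with all signs pinned down, so that the final comparison is a transparent matching of two sums over ordered pairs $(r,s)$ rather than a simultaneous juggling of every sign at once. An alternative, cleaner-looking route that I would consider if the direct signs get unwieldy: observe that $\bigoplus_{i\geq 2}\Phi_i$ is, up to the shift, induced by the comparison chain map $L_\bullet\colon K_\bullet\to F_\bullet$ followed by the standard ``partial diagonal'' $F_\bullet\to F_{\bullet-1}\otimes F_1\to F_{\bullet-1}\otimes K_1$ (using $L_1^{-1}$ on a free summand), both of which are chain maps by construction; the claimed identity is then just functoriality of these two chain maps, and the explicit formula is recovered by unwinding the definitions. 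Either way, no step is deep — it is entirely a verification — so I expect the write-up to be a few lines of indexed computation once the sign conventions are fixed.
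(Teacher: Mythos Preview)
Your proposal is correct and follows essentially the same route as the paper: a direct computation on a basis element $f_\sigma$, expanding both sides into a double sum over pairs $r,s\in\sigma$ of the form $\sgn(\cdot)\sgn(\cdot)\,\psi(f_s)\,L_{i-2}(f_{\sigma\setminus\{r,s\}})\otimes f_r$ and matching. Your write-up is in fact more careful than the paper's (which compresses the Leibniz step for $d^F L_{i-1}$ and the sign-matching into a single unexplained equality), so your plan to isolate the identity $d^F L_{i-1}(f_\tau)=\sum_{s\in\tau}\sgn(s\in\tau)\,\psi(f_s)\,L_{i-2}(f_{\tau\setminus s})$ first is a good one.
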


\begin{proof}
One computes:
\begingroup\allowdisplaybreaks
\begin{align*}
    \Phi_{i-1} \circ d^K_i (f_\sigma) &= \Phi_{i-1} \Big( \sum_{r \in \sigma} \sgn (r \in \sigma) L_{i-1} (f_{\sigma \backslash r}) \otimes f_r \Big) \\
    &= \sum_{r \in \sigma} \sum_{s \in \sigma \backslash r} \sgn (r \in \sigma) \sgn (s \in \sigma \backslash r) \psi (f_s) L_{i-2} (f_{\sigma \backslash r,s} ) \otimes f_r \\
    &= d_{i-1}^F \circ \Phi_i (f_\sigma). 
\end{align*}
\endgroup
\end{proof}

The following theorem provides a free resolution of any quotient of the form $R / \mfa I$, where $\mfa I$ is obtained as in Setup \ref{set:NonGOlodSet}.

\begin{theorem}\label{thm:theMFR}
Adopt notation and hypotheses as in Setup \ref{set:NonGOlodSet}. Then the mapping cone of the morphism of complexes
$$\xymatrix{\cdots \ar[r]^{d_{k+1}^K} &  K_{k} \ar[dd]^{\Phi_{k-1}}\ar[r]^{d_{k}^K} & \cdots \ar[r]^{d_3^K} & K_2 \ar[dd]^{\Phi_1} &&&&  \\
&&&&&&& \\
\cdots \ar[r]^-{d^F_k \otimes 1} & F_{k-1} \otimes K_1 \ar[r]^-{d^F_{k-1} \otimes 1} & \cdots \ar[r]^-{d^F_2 \otimes 1} & F_1 \otimes K_1 \ar[rr]^-{-d^F_1 \otimes d_1^K} && R \\}$$
is a free resolution of $R/ \mfa I$. If $\mfa \subseteq \m I$, then this resolution is minimal. 
\end{theorem}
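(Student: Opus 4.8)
The plan is to realize $R/\mathfrak{a}I$ as a trimming complex in the sense of Theorem \ref{itres}, applied to a carefully chosen free resolution of $R/\mathfrak{a}'$ where $\mathfrak{a}' \subseteq I$ is obtained from $\mathfrak{a}$, and then identify the mapping cone displayed in the statement with the corresponding iterated trimming complex. More concretely: write $\mathfrak{a} = (f_1,\dots,f_c)$ and $I = (\phi_1,\dots,\phi_n)$; the key algebraic observation is that $\mathfrak{a}I = f_1 I + \cdots + f_c I$, and since each $f_i \in I$, one can build the resolution of $R/\mathfrak{a}I$ by starting from the resolution $K_\bullet$ of $R/\mathfrak{a}$ (the Koszul complex) and ``expanding'' each Koszul generator $e_i$, replacing it by the generators of $I$ multiplied appropriately — this is exactly the iterated trimming construction of Setup \ref{set:trimmingcxSetup}, but run in the reverse direction (expanding rather than trimming), so that the ideals $\mathfrak{a}_i$ of that setup are all equal to $I$ and the auxiliary resolutions $G^i_\bullet$ are all equal to $F_\bullet$.

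The concrete steps I would carry out are as follows. First I would verify acyclicity directly: the mapping cone of a morphism of complexes $\varphi: A_\bullet \to B_\bullet$ where $A_\bullet$ and $B_\bullet$ are both acyclic has homology concentrated in degree $0$, equal to $\coker(H_0(\varphi): H_0(A) \to H_0(B))$ shifted appropriately; here $A_\bullet$ is $K_\bullet$ (resolving $R/\mathfrak{a}$) and $B_\bullet$ is the complex $\cdots \to F_1 \otimes K_1 \xrightarrow{-d_1^F \otimes d_1^K} R$. One must check that $B_\bullet$ is acyclic and that the cokernel of the induced map on $H_0$ is $R/\mathfrak{a}I$. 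Acyclicity of $B_\bullet$: note $F_{k-1}\otimes K_1 \cong F_{k-1}^{\oplus c}$ (since $K_1 = R^c$), with the differential $d^F \otimes 1$ acting as $d^F$ on each summand except at the bottom, where the map $F_1 \otimes K_1 \to R$ sends $x \otimes e_j \mapsto -d_1^F(x) f_j$. So $B_\bullet$ is (up to a shift and sign) the direct sum over $j$ of the truncated resolutions $F_\bullet$ with their augmentation rescaled by $f_j$; its image in $R$ is $\sum_j f_j I = \mathfrak{a}I$ and its homology above degree $0$ vanishes because each $f_j$ is a nonzerodivisor (as $\mathfrak{a}$ is a complete intersection, each $f_j$ together with the others forms a regular sequence — actually one needs that $f_j$ is a nonzerodivisor on the homology of $F_\bullet$, which holds since $R$ is regular and $F_\bullet$ resolves a module, hence $f_j$ NZD on $R$ suffices after checking $\mathrm{Tor}$ vanishing; more carefully, $f_j F_\bullet$ is exact wherever $F_\bullet$ is, since multiplication by the NZD $f_j$ is injective). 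Then I would check that the chain map $\Phi_\bullet$ (together with $\Phi_1 = d_2^F$ or rather the appropriate degree-$1$ comparison) is well-defined — this is the content of the Proposition immediately preceding — and that its mapping cone has $H_0 = R/(\mathfrak{a}I)$: the image in $R$ is $\mathfrak{a}I + \mathfrak{a} = \mathfrak{a}I$ since $\mathfrak{a} \subseteq \mathfrak{a}I$ would be false; rather, the cone's degree-$0$ term is $R$, the degree-$1$ differential out of $K_1 \oplus (F_1\otimes K_1)$ has image $\mathfrak{a} + \mathfrak{a}I = \mathfrak{a}$... here I must be careful. The correct statement is that after forming the cone, the map $K_1 \to R$ (the Koszul differential $d_1^K$) has been \emph{replaced} by the composite through $F_1 \otimes K_1$, so the generators $f_j$ of $\mathfrak{a}$ no longer appear directly; instead $e_j \in K_1$ maps via $\Phi$ (degree-shift issue: $\Phi$ is defined on $K_i$ for $i > 1$, and $L_1: K_1 \to F_1$ handles degree $1$) so that the degree-$1$ part of the cone has image exactly $\mathfrak{a}I$. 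This bookkeeping — getting the signs, the shift in the cone, and which map replaces which — is where I would spend the most care.

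For minimality under the hypothesis $\mathfrak{a} \subseteq \mathfrak{m}I$: the differentials of the cone are built from $d^F$ (minimal by assumption on $F_\bullet$), $d^K$ (minimal since $K_\bullet$ is Koszul on minimal generators of $\mathfrak{a}$ — but wait, $\mathfrak{a}$ need not be part of a minimal generating set of $\mathfrak{m}$; rather $K_\bullet$ here is the Koszul complex on $f_1,\dots,f_c$, which is minimal as a resolution iff the $f_i$ are a minimal generating set of $\mathfrak{a}$, i.e. lie in $\mathfrak{m}\setminus\mathfrak{m}^2$ modulo the others — for a complete intersection this holds automatically since a regular sequence minimally generates its ideal when contained in $\mathfrak{m}$), and the comparison maps $\Phi_i$ and the bottom map $-d_1^F \otimes d_1^K$. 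The only entries that could be units are in $\Phi_i$ (built from $L_1$ and $\cdot_F$) and in the bottom map. The bottom map $F_1 \otimes K_1 \to R$ sends $x \otimes e_j \mapsto \pm d_1^F(x) f_j$; since $f_j \in \mathfrak{m}$, this lands in $\mathfrak{m}$, so no units. For $\Phi_i: K_i \to F_{i-1}\otimes K_1$: the entries of $L_1$ are in $\mathfrak{m}$ because $L_1$ is a comparison map lifting $\mathfrak{a} \hookrightarrow I$, and since $\mathfrak{a} \subseteq \mathfrak{m}I$, the generators $f_j$ map to elements of $\mathfrak{m}I \subseteq$ (image of $\mathfrak{m}\cdot F_1$), so $L_1(e_j) \in \mathfrak{m}F_1$; hence every entry of every $L_i$ (a product of $L_1$'s) lies in $\mathfrak{m}$, and therefore so does every entry of $\Phi_i$. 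Thus all differentials of the cone have entries in $\mathfrak{m}$, proving minimality. The main obstacle, as noted, is the sign and degree-shift bookkeeping in identifying the cone precisely and confirming its $H_0$ — everything else is a routine application of the mapping-cone lemma plus the fact that multiplication by a nonzerodivisor preserves exactness.
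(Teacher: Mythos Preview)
Your opening paragraph is exactly the paper's proof: apply Theorem \ref{itres} with the roles reversed, taking the ``outer'' resolution to be the Koszul complex $K_\bullet$ on $\mfa$, setting $F_1' = 0$ (trim every generator), and taking each $\mfa_i = I$ with $G_\bullet^i = F_\bullet$. Since $\mfa \subseteq I$, the containment $d_0^i(K_2) \subseteq I\,e_0^i$ required in Setup \ref{set:trimmingcxSetup} holds, and the trimmed ideal is $\sum_j I\cdot(f_j) = \mfa I$. Your minimality argument is also correct and matches the paper's: $\mfa \subseteq \m I$ forces $L_1(K_1) \subseteq \m F_1$, hence every $\Phi_i$ has entries in $\m$.

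However, the ``direct'' verification you sketch in the second paragraph has a genuine gap: the bottom complex $B_\bullet$ is \emph{not} acyclic in degree $1$. Writing $K_1 \cong R^c$, the complex $B_\bullet$ is $c$ copies of $F_{\geq 1}$ all augmenting into a \emph{single} copy of $R$ via $x\otimes g_j \mapsto -f_j\,d_1^F(x)$. The kernel of this map contains cross-relations such as $f_\ell\,(e\otimes g_j) - f_j\,(e\otimes g_\ell)$ for any basis element $e \in F_1$, and these do not lie in the image of $d_2^F\otimes 1$ (that image consists only of tuples whose $j$th component already lies in $\ker d_1^F$). So $H_1(B)\neq 0$ in general, and you cannot deduce acyclicity of the cone from acyclicity of $A$ and $B$ separately via the long exact sequence. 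These cross-relations are precisely what the map $K_2 \to F_1\otimes K_1$ contributes in the cone; accounting for them correctly is the content of Theorem \ref{itres}, so just invoke it as you did in your first paragraph and drop the direct argument.
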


\begin{proof}
Simply observe that the diagram appearing in Theorem \ref{thm:theMFR} is a more concise way to write the trimming complex appearing in Theorem \ref{itres}. The $\Phi_i$ maps constructed in Setup \ref{set:NonGOlodSet} make the appropriate diagrams commute, so the mapping cone is a free resolution of $R / \mfa I$ as desired. If $\mfa \subseteq \m I$, then the maps $L_i$ as in Setup \ref{set:NonGOlodSet} satisfy $L_i (K_i) \subseteq \m F_{i-1} \otimes K_1$. This completes the proof.
\end{proof}

\begin{prop}\label{prop:nonGolodRing}
Adopt notation and hypotheses as in Setup \ref{set:NonGOlodSet} with $\mfa \subset \m I$. Assume $\mu ( \mfa ) \geq 4$. Then $\mfa I$ does not define a Golod ring. 
\end{prop}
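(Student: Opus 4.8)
The plan is to exhibit an explicit nontrivial product in the Tor-algebra of $R/\mfa I$ using the minimal free resolution furnished by Theorem \ref{thm:theMFR}. Since $\mfa \subseteq \m I$, that resolution is minimal, so $\Tor^R_\bullet(R/\mfa I , k)$ is computed by tensoring the mapping cone with $k$, and a nontrivial product there obstructs the existence of a trivial Massey operation (recall the observation after the trivial Massey definition that $H_{\geq 1}^2 = 0$ for any DG-algebra admitting one). First I would set up coordinates: write $\mfa = (f_1 , \dots , f_m)$ with $m = \mu(\mfa) \geq 4$ and let $K_\bullet = \bigwedge^\bullet R^m$ be the Koszul resolution of $R/\mfa$ with basis $e_1 , \dots , e_m$ in degree $1$. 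Because $\mfa \subseteq I$, each $f_r = d^F_1(\xi_r)$ for some $\xi_r \in F_1$; the comparison map $L_1 : K_1 \to F_1$ is exactly $e_r \mapsto \xi_r$.

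The key computation is to identify, inside the mapping cone complex $T_\bullet$ of Theorem \ref{thm:theMFR} (with its DG-algebra structure — note that associativity is not needed here since we only need \emph{one} nonzero product), two cycles whose product is a nonzero cycle after $\otimes\, k$. The natural candidates are the basis cycles $\overline{e_r} \in K_1 \otimes k$ (these survive to $T_1 \otimes k$, since the $K_\bullet$-strand maps into $T_\bullet$) for $r = 1,2,3,4$. In the Koszul algebra $K_\bullet$ one has $e_i \wedge e_j \in K_2$, which maps via $\Phi_2$ into $F_1 \otimes K_1$; more relevantly, the product structure on $T_\bullet$ restricted to the $K_\bullet$-strand should recover the exterior product, so that $\overline{e_i} \cdot_T \overline{e_j}$ lands in the class of $\overline{e_i \wedge e_j} \in K_2 \otimes k \subseteq T_2 \otimes k$. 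Since $K_\bullet$ is minimal and $m \geq 4$, the element $\overline{e_i \wedge e_j}$ is a nonzero basis element of $K_2 \otimes k$ for $i \neq j$, and it is not a boundary in $T_\bullet \otimes k$ (the differential is zero on $T_\bullet \otimes k$ by minimality). Hence $\overline{e_1} \cdot_T \overline{e_2} \neq 0$, giving the nontrivial product. The hypothesis $\mu(\mfa) \geq 4$ is what guarantees both basis vectors $e_1, e_2$ and the wedge $e_1 \wedge e_2$ persist as independent classes that are genuinely "degree $\geq 1$ times degree $\geq 1$ equals nonzero"; I would double-check that for $\mu(\mfa) \leq 3$ this argument degenerates (the wedge either vanishes or gets absorbed), consistent with Proposition \ref{prop:golodProdforleq3}.

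The main obstacle I anticipate is \emph{pinning down the product on $T_\bullet$ precisely enough} to be sure that $\overline{e_i}\cdot_T\overline{e_j}$ really is $\overline{e_i\wedge e_j}$ and not something that dies modulo $\m$. The DG-structure on the iterated trimming complex from Theorem \ref{thm:DGlength3}/\cite{vandebogert2020dg} is intricate, but here the relevant strand is just the honest Koszul complex $K_\bullet$ sitting as a subcomplex of the mapping cone (with its first differential rescaled — but that rescaling is by $d_1^F(e_0^i) = f_i \in \m$, which is irrelevant mod $k$), so the product of two elements of $K_1$ within $T_\bullet$ should literally be their Koszul product $e_i \wedge e_j \in K_2$, possibly corrected by terms in $\m T_2$ that vanish after $\otimes\, k$. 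Concretely, I would invoke Corollary \ref{cor:nontrivialMults} (or its analogue for this setup) to see that the only products that can be nonzero involve the $\overline{F_1'}$, $\overline{F_2}$, and $\overline{G_j^i}$ pieces, identify the $K_1$-classes $\overline{e_r}$ with classes in $\overline{G_1^i}$, and trace through which of the listed product types $\overline{e_i}\cdot_T\overline{e_j}$ falls under, then verify the corresponding closed-form product from Proposition 3.4 of \cite{vandebogert2020dg} gives a unit multiple of a basis element of $T_2 \otimes k$. Once that identification is secure, the conclusion is immediate: $R/\mfa I$ has nontrivial Tor-algebra multiplication, so it is not Golod.
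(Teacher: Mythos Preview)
Your proposal has a genuine gap: the degree-$1$ elements you want to multiply are not actually present in $T_\bullet$, and even if they were, their product would vanish. In the mapping cone of Theorem \ref{thm:theMFR} one has $T_1 = F_1 \otimes K_1$ (the module $F_1'$ from the trimming setup is zero here, since every generator of $\mfa$ is trimmed), so the Koszul strand $K_\bullet$ only enters $T_\bullet$ beginning in homological degree $2$; there is no copy of $K_1$ sitting inside $T_1$. The classes $\overline{e_r}$ you propose simply do not live in $T_1\otimes k$. More decisively, Observation \ref{obs:trivialDeg1} shows $H_1(R/\mfa I)\cdot H_1(R/\mfa I)=0$ for \emph{any} product of proper ideals, so no choice of degree-$1$ classes can give a nontrivial product. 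Your argument also never uses $\mu(\mfa)\geq 4$: the elements $e_1,e_2,e_1\wedge e_2$ are independent as soon as $\mu(\mfa)\geq 2$.

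The paper's proof works in degree $2$ instead. One takes $g_{ij},g_{k\ell}\in K_2\subseteq T_2$ with $i,j,k,\ell$ distinct (this is where $\mu(\mfa)\geq 4$ is used) and verifies, via the explicit product of \cite[Theorem 3.3]{vandebogert2020dg}, that the projection of $d^T(g_{ij}\cdot_T g_{k\ell})$ onto $K_3$ agrees with $d^K(-g_{ij}\wedge g_{k\ell})$. Hence $g_{ij}\cdot_T g_{k\ell}$ may be taken to have $K_4$-component $-g_{ijk\ell}$ up to a cycle, which is a nonzero basis element of $T_4\otimes k$ by minimality. That nontrivial degree-$2$ product is what obstructs Golodness. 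If you want to repair your argument, the right move is to shift attention from $K_1$ to $K_2$ and reproduce this computation.
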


In the following proof, we tacitly use the observation that the algebra structure provided by Theorem \ref{thm:DGlength3} also provides a partial (not necessarily associative) algebra structure on trimming complexes of longer length. Likewise, so as to not conflict with indexing notation/the residue field, let $K_1$ (as in Setup \ref{set:NonGOlodSet}) have basis $g_1 , \dots , g_n$ for some $1 \leq n \leq \dim R$. The notation $g_{ij}$ is shorthand for $g_i \w g_j \in K_2$.

\begin{proof}
The result will follow by showing that there is always nontrivial multiplication between certain types of elements of homological degree $2$. Let $T_\bullet$ denote the complex of Theorem \ref{thm:theMFR} and $\pi_K$ denote the projection of $T_\bullet$ onto $K_\bullet$. Using the product of Theorem \ref{thm:DGlength3} (with explicit form written in \cite[Theorem 3.3]{vandebogert2020dg}), one finds:
\begingroup\allowdisplaybreaks
\begin{align*}
    \pi_K \circ d^T ( g_{ij} \cdot_T g_{k\ell} ) &= - d^K (g_i) g_i \w g_{k \ell} + d^K (g_j) g_i \w g_{k \ell} \\
    &- d^K (g_k) g_{ij} \w g_\ell + d^K (g_\ell) g_{ij} \w g_k \\
    &= \pi_K \circ d^T (- g_{ij} \w g_{k \ell} ).
\end{align*}
\endgroup
This implies that $\cdot_T$ may be chosen (up to a cycle which disappears after tensoring with $k$) such that $\pi_K (g_{ij} \cdot_T g_{k \ell}) =  \pi_K ( -g_{ij} \w g_{k \ell})$; this of course implies that after tensoring with the residue field, $g_{ij} \cdot_T g_{k \ell}$ is nontrivial, whence $\mfa I$ cannot define a Golod ring. 
\end{proof}

In the $3$-dimensional case, we instead find that the opposite of Proposition \ref{prop:nonGolodRing} holds.

\begin{prop}\label{prop:golodProdforleq3}
Let $R$ be a $3$-dimensional regular local ring. Let $I$ be a proper ideal of $R$ and $\mfa \subseteq I$ any complete intersection. Then $\mfa I$ defines a Golod ring.
\end{prop}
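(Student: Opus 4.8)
The plan is to reduce to the case $\mfa \subseteq \m I$ and then combine Theorem \ref{thm:theMFR} with Proposition \ref{prop:prodKoszul}. First I would dispose of the boundary cases: if $\mfa \not\subseteq \m I$, then some minimal generator of $\mfa$ is (up to unit and lower-order corrections) a minimal generator of $I$, and one can peel it off and induct on $\mu(\mfa)$, or invoke the Tor-algebra classification directly. So assume $\mfa \subseteq \m I$. Since $\dim R = 3$, the complete intersection $\mfa$ has $\mu(\mfa) \leq 3$, and the Koszul complex $K_\bullet$ resolving $R/\mfa$ has length $\leq 3$. By Theorem \ref{thm:theMFR}, the minimal free resolution $T_\bullet$ of $R/\mfa I$ is the mapping cone of the $\Phi$-morphism from $K_\bullet$ to $F_\bullet \otimes K_1$ (shifted), which is concentrated in homological degrees $0$ through $\leq 4$; and because $\pd(R/\mfa I) = 3$ in this setting (it is $\m$-primary when $I$ is, and in general one checks the resolution truncates), the relevant products all live in low degree.

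The key step is to identify the generators of $H_\bullet(R/\mfa I \otimes K^R_\bullet)$ and verify they have the form required by Proposition \ref{prop:prodKoszul}, namely classes of elements $d(z_i^{\mfa}) \wedge z_j^I$. Here the right objects to exploit are Observation \ref{obs:KoszulLifts} and the explicit shape of $T_\bullet$: the $K_\bullet$-summand contributes homology classes that, under the comparison to the Koszul complex $K^R_\bullet$ of $R$, are exactly of the form $\phi(g_\sigma)\, d_1(e_0)$, i.e. a lift of a Koszul cycle of $R/\mfa$ multiplied by the differential of the trimmed generator — which is precisely $d(z^{\mfa}) \wedge z^I$ up to reindexing, since $d_1(e_0) \in I$ and the generators of $\mfa$ are products of generators of $I$. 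The $F_\bullet \otimes K_1$-summand, which resolves (roughly) $R/I$ tensored with a Koszul direction, contributes classes of the form $z^I_j \wedge g$ where $g$ is a linear Koszul cycle coming from $\mfa \subseteq I$; again these are $d(z^{\mfa})\wedge z^I$-type elements once one writes $g = d(\text{something in }\mfa)$ modulo the resolution. So every generator of the Koszul homology of $R/\mfa I$ is of the required shape, and Proposition \ref{prop:prodKoszul} applies.

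The main obstacle I expect is the bookkeeping in the previous paragraph: matching the abstract summands $K_j$ and $F_{j-1}\otimes K_1$ of the trimming complex $T_\bullet$ with honest Koszul homology classes of $R/\mfa I$ via the functorial isomorphism $H_\bullet(T_\bullet \otimes k) \cong H_\bullet(R/\mfa I \otimes K^R_\bullet)$, and checking that the ``$d(z^{\mfa})\wedge z^I$'' description survives the three-step lift-and-project recipe recalled before Observation \ref{obs:KoszulLifts}. For the $K_\bullet$-summand this is exactly the content of Observation \ref{obs:KoszulLifts}, so that half is essentially free. For the $F_\bullet\otimes K_1$-summand one needs a small additional argument: a cycle in $F_{j-1}\otimes K_1$ representing Koszul homology of $R/\mfa I$ can be taken of the form $z^I_{j-1}\otimes g$ with $z^I_{j-1}$ a Koszul cycle for $R/I$ and $g \in K_1$, and then $z^I_{j-1}\otimes g = \pm\, g \wedge z^I_{j-1}$ with $g$ a generator of $\mfa$ viewed in $K^R_1$, i.e. of the form $d(z^{\mfa}_1)\wedge z^I_{j-1}$ since $d$ of the degree-$1$ generator $e^{\mfa}$ is $g$ itself. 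Once these identifications are in place, Proposition \ref{prop:prodKoszul} finishes the proof; alternatively, one could bypass Proposition \ref{prop:prodKoszul} entirely and just invoke Corollary \ref{cor:torAlgProd} together with a direct check that the only surviving products $\overline{F_1'}\cdot_T\overline{F_1'}$ and $\overline{F_1'}\cdot_T\overline{F_2}$ from Proposition \ref{prop:boundedProds} vanish here because, after trimming a complete intersection, $F_1' = 0$ or the relevant Tor-classes are forced into class $H(0,q)$.
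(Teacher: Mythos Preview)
There is a genuine gap in your main approach: you have swapped the roles of the two summands when invoking Observation \ref{obs:KoszulLifts}. In the trimming complex of Theorem \ref{thm:theMFR}, the Koszul complex $K_\bullet$ of $R/\mfa$ plays the role of the \emph{top} row (the ``$F_\bullet$'' of Setup \ref{set:trimmingcxSetup}), while $F_\bullet\otimes K_1$ is the \emph{bottom} row (the ``$\bigoplus G_\bullet^i$''). Observation \ref{obs:KoszulLifts} describes the Koszul-homology lift only for the bottom-row summands, so it tells you that basis elements of $F_j\otimes K_1$ lift to classes $a_i\cdot z^I_j = d(z^\mfa_1)\wedge z^I_j$, but it says nothing about the $K_j$-summand. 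For those classes there is no reason to expect the form $d(z^\mfa)\wedge z^I$; indeed, if they always had that form, Proposition \ref{prop:prodKoszul} would force $\mfa I$ to be Golod regardless of $\mu(\mfa)$, contradicting Proposition \ref{prop:nonGolodRing}. So the sentence ``for the $K_\bullet$-summand this is exactly the content of Observation \ref{obs:KoszulLifts}'' is not bookkeeping---it is where the argument breaks.

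Your alternative route also does not go through. Proposition \ref{prop:boundedProds} is proved only in the context of Setup \ref{set:trimmingSetup}, where one trims by the maximal ideal $\m$; its conclusion that only $\overline{F_1'}\cdot_T\overline{F_1'}$ and $\overline{F_1'}\cdot_T\overline{F_2}$ survive does not apply when the trimming ideal is a general $I$. The paper instead invokes Corollary \ref{cor:nontrivialMults} (valid for arbitrary trimming data). Since here all generators of $\mfa$ are trimmed, the ``$F_1'$'' of that corollary vanishes, leaving exactly the product $\overline{G_1^i}\cdot_T\overline{F_2}$, i.e.\ $(F_1\otimes K_1)\otimes K_2 \to (F_3\otimes K_1)\oplus K_3$, as the only thing to kill. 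The paper writes down a closed form for this product,
\[
(f_1\otimes e_0^i)\cdot_T g_{jk} \;=\; d^F(f_1)\,e_0^i\wedge g_{jk} \;+\; f_1\cdot_F L_2(g_{jk})\otimes e_0^i,
\]
and shows it vanishes after tensoring with $k$ by splitting on whether $I$ is a complete intersection: if $I$ is not CI then $f_1\cdot_F L_2(g_{jk})\in\m F_3$ because non-CI Tor-algebras in codepth $3$ have trivial triple product; if $I$ is CI the argument is more delicate, handling both $\mfa\subseteq\m I$ and $\mfa\not\subseteq\m I$ directly (the latter via Proposition \ref{prop:golodSplit} and an analysis of which $L_2(g_{ij})$ are units), rather than by the ``peel off and induct'' reduction you sketch.
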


\begin{proof}
Assume to avoid trivialities that $R / \mfa$ and $R / I$ both have projective dimension $3$ over $R$. A free resolution of $R/ \mfa I$ is obtained as in Theorem \ref{thm:theMFR}. Notice that by Corollary \ref{cor:nontrivialMults} there is only one possible product that could descend to a nontrivial product in the Tor-algebra, and that is given by the product $(F_1 \otimes K_1) \otimes K_2 \to (F_3 \otimes K_1) \oplus K_3$. One can compute a simple closed form for the product $(F_1 \otimes K_1) \otimes K_2 \to (F_3 \otimes K_1) \oplus K_3$ as
$$(f_1 \otimes e_0^i) \cdot_T g_{ij} := d^F (f_1) e_0^i \w g_{ij} + f_1 \cdot_F L_2( g_{ij} ) \otimes e_0^i.$$
It is straightforward to verify that this product is compatible with the products defined in Proposition $3.4$ of \cite{vandebogert2020dg}. The proof then follows after considering two cases:

\textbf{Case 1:} $I$ is a complete intersection. If $L_1 (K_1) \subseteq \m F_1$, then it is clear that the above product vanishes after tensoring with the residue field. Thus assume that $L_2 (g_{ij}) \notin \m F_2$; if this holds for all $1 \leq i , j \leq 3$, then $\mfa I$ is Golod by Proposition \ref{prop:golodSplit}. Otherwise, we may assume that $L_2 (g_{12} ) \notin \m F_2$ and $L_1 (g_{i3}) \in \m F_1$ for $i=1,2$; in this case, $g_{12}$ must vanish after descending to homology since $g_{13}$ and $g_{23} \in \ker \Phi_1 \otimes k$. Thus any product involving $g_{12}$ is trivial. Likewise, any product with $g_{i3}$ for $i=1,2$ is trivial since $L_2 (g_{i3}) \in \m F_1$ for $i=1,2$. In any of the above scenarios, the product is trivial after descending to homology.

\textbf{Case 2:} $I$ is not a complete intersection. This case follows immediately since $f_1 \cdot_F L_2( g_{ij}) \in \m F_3$ for the simple reason that the only class of ideals admitting a nontrivial triple product in the Tor-algebra are complete intersections by \cite[Theorem 2.1]{avramov1988poincare}.
\end{proof}

It is not clear how to extend the techniques related to trimming complexes to handle the case of arbitrary products of ideals. If the inclusion of Fitting ideals imposed above is not assumed, then the trimming procedure becomes considerably more complicated. In the graded case, it is possible that appropriate regularity bounds combined with careful degree counting will show that the algebra structure given in \cite{vandebogert2020dg} descends to a trivial algebra structure in homology. 

Instead of tackling the general case all at once, one could instead restrict to generic ideals in the $3$-variable polynomial ring. More precisely,

\begin{question}
Let $R = k[x,y,z]$ and $I$ and $J$ be two proper ideals of $R$ defining compressed rings. Does $IJ$ define a Golod ring?
\end{question}

Perhaps a proof of this statement would give insight into the general case (or intuition for building a counterexample).

\section*{Acknowledgements}

Thanks to Hailong Dao for suggesting a problem to me that resulted in this paper, and thanks to the anonymous referee for multiple improvements and corrections.

\bibliographystyle{amsplain}
\bibliography{biblio}
\addcontentsline{toc}{section}{Bibliography}

\end{document}